\title[Polyak steps with momentum]{Complexity Guarantees for Polyak Steps with Momentum}
\renewcommand{\algorithmicrequire}{\textbf{Input:}} 
\renewcommand{\algorithmicensure}{\textbf{Output:}}
\newcommand{\N}{\mathbb{N}}
\pgfplotsset{compat=1.13}
\pgfplotsset{plotOptions/.style={%
		label style={font=\scriptsize},
		legend style={font=\scriptsize},
		tick label style={font=\scriptsize},
		solid,
		very thick
	}}
\definecolor{colorP1}{RGB}{55,126,184}  
\definecolor{colorP2}{RGB}{228,26,28}  
\definecolor{colorP3}{RGB}{152,78,163} 
\definecolor{colorP4}{RGB}{77,175,74}  
\definecolor{colorP5}{RGB}{250, 150, 10} 
\definecolor{colorP6}{RGB}{139,69,19}
\begin{document}
\definecolor{ddarkbrown}{rgb}{0.5,0.2,0.05} \definecolor{bbluegray}{rgb}{0.05,0,0.5}

\renewenvironment{proof}{\textbf{Proof.}}{\QED\bigskip}

\newcommand{\BEAS}{\begin{eqnarray*}}
\newcommand{\EEAS}{\end{eqnarray*}}
\newcommand{\BEA}{\begin{eqnarray}}
\newcommand{\EEA}{\end{eqnarray}}
\newcommand{\BEQ}{\begin{equation}}
\newcommand{\EEQ}{\end{equation}}
\newcommand{\BIT}{\begin{itemize}}
\newcommand{\EIT}{\end{itemize}}
\newcommand{\BNUM}{\begin{enumerate}}
\newcommand{\ENUM}{\end{enumerate}}

\newcommand{\BA}{\begin{array}}
\newcommand{\EA}{\end{array}}

\newcommand{\refp}[1]{(\ref{#1})}

\newcommand{\cf}{{\it cf.}}
\newcommand{\eg}{{\it e.g.}}
\newcommand{\ie}{{\it i.e.}}
\newcommand{\etc}{{\it etc.}}
\newcommand{\ones}{\mathbf 1}

\newcommand{\reals}{{\mathbb R}}
\newcommand{\sreals}{\scriptsize{\mbox{\bf R}}}
\newcommand{\integers}{{\mbox{\bf Z}}}
\newcommand{\eqbydef}{\mathrel{\stackrel{\Delta}{=}}}
\newcommand{\complex}{{\mbox{\bf C}}}
\newcommand{\symm}{{\mbox{\bf S}}}  

\newcommand{\Span}{\mbox{\textrm{span}}}
\newcommand{\Range}{\mbox{\textrm{range}}}
\newcommand{\nullspace}{{\mathcal N}}
\newcommand{\range}{{\mathcal R}}
\newcommand{\diam}{\mathop{\bf radius}}
\newcommand{\sphere}{{\mathbb S}}
\newcommand{\Nullspace}{\mbox{\textrm{nullspace}}}
\newcommand{\Rank}{\mathop{\bf Rank}}
\newcommand{\NumRank}{\mathop{\bf NumRank}}
\newcommand{\NumCard}{\mathop{\bf NumCard}}
\newcommand{\Card}{\mathop{\bf Card}}
\newcommand{\Tr}{\mathop{\bf Tr}}
\newcommand{\diag}{\mathop{\bf diag}}
\newcommand{\lambdamax}{{\lambda_{\rm max}}}
\newcommand{\lambdamin}{\lambda_{\rm min}}
\newcommand{\idm}{\mathbf{I}}

\newcommand{\Expect}{\textstyle{\bf E}}
\newcommand{\Median}{\textstyle\mathop{\bf M}}
\newcommand{\Prob}{\mathop{\bf Prob}}
\newcommand{\erf}{\mathop{\bf erf}}

\newcommand{\Co}{{\mathop {\bf Co}}}
\newcommand{\co}{{\mathop {\bf Co}}}
\newcommand{\Var}{\mathop{\bf var{}}}
\newcommand{\dist}{\mathop{\bf dist{}}}
\newcommand{\Ltwo}{{\bf L}_2}
\newcommand{\QED}{~~\rule[-1pt]{6pt}{6pt}}\def\qed{\QED}
\newcommand{\approxleq}{\mathrel{\smash{\makebox[0pt][l]{\raisebox{-3.4pt}{\small$\sim$}}}{\raisebox{1.1pt}{$<$}}}}
\newcommand{\argmin}{\mathop{\rm argmin}}
\newcommand{\epi}{\mathop{\bf epi}}
\newcommand{\var}{\mathop{\bf var}}

\newcommand{\vol}{\mathop{\bf vol}}
\newcommand{\Vol}{\mathop{\bf vol}}

\newcommand{\dom}{\mathop{\bf dom}}
\newcommand{\aff}{\mathop{\bf aff}}
\newcommand{\cl}{\mathop{\bf cl}}
\newcommand{\Angle}{\mathop{\bf angle}}
\newcommand{\intr}{\mathop{\bf int}}
\newcommand{\relint}{\mathop{\bf rel int}}
\newcommand{\bd}{\mathop{\bf bd}}
\newcommand{\vect}{\mathop{\bf vec}}
\newcommand{\dsp}{\displaystyle}
\newcommand{\foequal}{\simeq}
\newcommand{\VOL}{{\mbox{\bf vol}}}
\newcommand{\argmax}{\mathop{\rm argmax}}
\newcommand{\xopt}{x^{\rm opt}}

\newcommand{\Xb}{{\mbox{\bf X}}}
\newcommand{\xst}{x^\star}
\newcommand{\varphist}{\varphi^\star}
\newcommand{\lambdast}{\lambda^\star}
\newcommand{\Zst}{Z^\star}
\newcommand{\fstar}{f^\star}
\newcommand{\xstar}{x^\star}
\newcommand{\xc}{x^\star}
\newcommand{\lambdac}{\lambda^\star}
\newcommand{\lambdaopt}{\lambda^{\rm opt}}

\newcommand{\geqK}{\mathrel{\succeq_K}}
\newcommand{\gK}{\mathrel{\succ_K}}
\newcommand{\leqK}{\mathrel{\preceq_K}}
\newcommand{\lK}{\mathrel{\prec_K}}
\newcommand{\geqKst}{\mathrel{\succeq_{K^*}}}
\newcommand{\gKst}{\mathrel{\succ_{K^*}}}
\newcommand{\leqKst}{\mathrel{\preceq_{K^*}}}
\newcommand{\lKst}{\mathrel{\prec_{K^*}}}
\newcommand{\geqL}{\mathrel{\succeq_L}}
\newcommand{\gL}{\mathrel{\succ_L}}
\newcommand{\leqL}{\mathrel{\preceq_L}}
\newcommand{\lL}{\mathrel{\prec_L}}
\newcommand{\geqLst}{\mathrel{\succeq_{L^*}}}
\newcommand{\gLst}{\mathrel{\succ_{L^*}}}
\newcommand{\leqLst}{\mathrel{\preceq_{L^*}}}
\newcommand{\lLst}{\mathrel{\prec_{L^*}}}

\newcommand{\realsp}{\mathbf{R}_+^n}
\newcommand{\intrealsp}{\int_{\mathbf{R}_+^n}}

\maketitle
\begin{abstract}
In smooth strongly convex optimization, knowledge of the strong convexity parameter is critical for obtaining simple methods with accelerated rates. In this work, we study a class of methods, based on Polyak steps, where this knowledge is substituted by that of the optimal value,~$f_*$. We first show slightly improved convergence bounds than previously known for the classical case of simple gradient descent with Polyak steps, we then derive an accelerated gradient method with Polyak steps and momentum, along with convergence guarantees.
\end{abstract}


\section{Introduction}
We focus on unconstrained optimization problems of the form
\[ 
\min_{x\in\reals^n} f(x),
\]
where $f$ is strongly convex and has a Lipschitz continuous gradient with respect to the Euclidean norm. Very broadly speaking, the current numerical toolbox to solve these convex minimization problems contains two types of methods. On one hand, simple numerical schemes with explicit albeit conservative theoretical guarantees. These include gradient methods and their accelerated variants, and require knowing problem {\em parameters}, such as strong convexity parameters, or H\"olderian error bounds \citep{Bolt07}. On the other hand, {\em adaptive methods}, such as conjugate gradients or quasi-Newton, adapting much better to the objective function by estimating some of its regularity properties. For these methods, we typically have no theoretical justification for their improved performances or no computational complexity bounds at all.

Empirically, adaptive methods often perform significantly better than their parametric counterparts, and, by nature, require much less tuning. For example, roughly estimating regularity constants on-the-fly and plugging these estimates in parametric algorithms often produces fast algorithms with no theoretical guarantees. This phenomenon is illustrated in Figure~\ref{fig:intro-adap} on logistic regression.

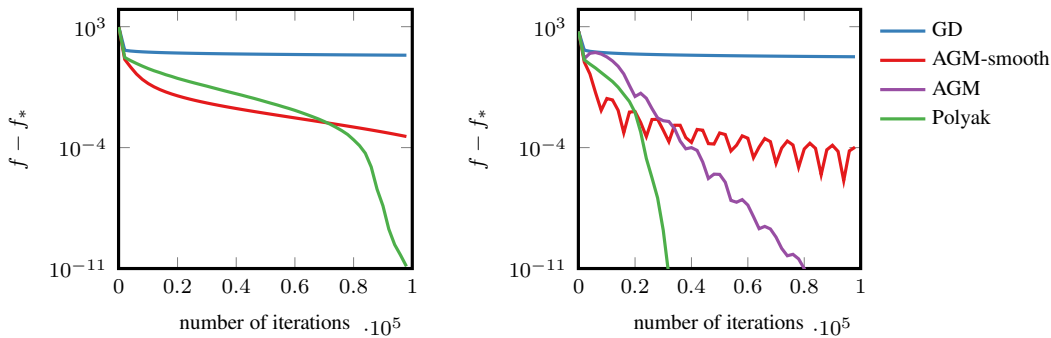
\begin{figure}[ht!]
	\centering
	    \begin{tabular}{cc}
		\begin{tikzpicture}
		\begin{semilogyaxis}[legend pos=outer north east, legend style={draw=none},legend cell align={left}, ylabel={$f-f_*$},xlabel={number of iterations}, plotOptions,  ymin=1e-11, ymax=1e4,xmin=-100,xmax=100000,width=.36\linewidth, height=.33\linewidth]
		\addplot [color=colorP1] table [x=iter,y=gd] {figures/logitSonarReg1e-7.txt};
		\addplot [color=colorP2] table [x=iter,y=apg] {figures/logitSonarReg1e-7.txt};
		\addplot [color=colorP4] table [x=iter,y=pol] {figures/logitSonarReg1e-7.txt};
		\end{semilogyaxis}
		\end{tikzpicture}
		&
		\begin{tikzpicture}
		\begin{semilogyaxis}[legend pos=outer north east, legend style={draw=none},legend cell align={left}, ylabel={$f-f_*$},xlabel={number of iterations}, plotOptions,  ymin=1e-11, ymax=1e4,xmin=-100,xmax=100000,width=.35\linewidth, height=.33\linewidth]
		\addplot [color=colorP1] table [x=iter,y=gd] {figures/logitSonarReg1e-4.txt};
		\addlegendentry{GD}
		\addplot [color=colorP2] table [x=iter,y=apg] {figures/logitSonarReg1e-4.txt};
		\addlegendentry{AGM-smooth}
		\addplot [color=colorP3] table [x=iter,y=apgl] {figures/logitSonarReg1e-4.txt};
		\addlegendentry{AGM}
		\addplot [color=colorP4] table [x=iter,y=pol] {figures/logitSonarReg1e-4.txt};
		\addlegendentry{Polyak}
		\end{semilogyaxis}
		\end{tikzpicture}
		\end{tabular}
		\vspace{-.5cm}
		\caption{Convergence of gradient descent (GD), accelerated gradient method for smooth optimization (AGM-smooth) \citep{Nest83b}, accelerated gradient method with constant momentum (AGM)---described below as Algorithm~\ref{algo:agdp} with~\eqref{eq:costmom}---where the momentum is set using the value of the regularization parameter and gradient method with Polyak steps (Polyak). Experiments on regularized logistic regression for the Sonar dataset without any tuning of the methods. Left:~regularization parameter $10^{-7}$. Right:~regularization parameter $10^{-4}$. For Polyak steps, the best iterate is displayed. Observe that Polyak method is a (non-accelerated) adaptive method, which performs comparatively well against accelerated schemes.}
		\label{fig:intro-adap}
		\vspace{-.7cm}
\end{figure}

Although many advances have been made in designing optimization schemes adaptive to some types of parameters (e.g., Lipschitz constants, see discussions below), these results still leave a huge gap between theory and practice (as in Figure~\ref{fig:intro-adap}). In particular, estimating strong convexity coefficients while preserving convergence guarantees remains a challenging issue. Restart schemes are probably the most effective option among existing approaches for adapting to this type of parameters and do provide improved complexity estimates without any knowledge of strong convexity parameters, at the expense of a log scale grid search. However, while on paper the complexity of these schemes is nearly optimal, the presence of an outer loop clearly limits their practical effectiveness and their capacity to adapt to the function's local regularity, which leaves a lot of margin for improvement, on the numerical front. Producing single loop algorithms adapting to local strong convexity (or H\"olderian error bounds) and have nearly optimal complexity bounds is an important open problem which is the main focus of this work.

Here, we study the complexity of adaptive methods using Polyak steps, estimating the momentum term using information on the optimum objective value $f_*$ instead of the strong convexity constant. In some scenarios, such as ``interpolation'' in machine learning problems, the value of $f_*$ is known a priori (usually zero), and estimating it is much easier than estimating strong convexity, see e.g.,~\citep{Asi19b} for a recent discussion on these model assumptions.

The obvious next research question in this direction is to substitute knowledge on $f_*$ by weaker bounds. A first step in this direction is for example \citep{Haza19b} which uses successive refinements of a lower bound on $f_*$. As it is, the proof in \citep{Haza19b} contains several errors, but can be fixed. We hope, and believe, that such a mechanism could be used together with our momentum version of the Polyak steps.

\subsection{Related works}

\paragraph*{Gradient and accelerated gradient methods.}
For smooth optimization problems, simple line search strategies provide accelerated algorithms that adapt to the local gradient Lipschitz constant~\citep{Nest13b} and explicit adaptive complexity bounds can be derived for certain variants using the mean root Lipschitz constant \citep{scheinberg2014fast}.

\paragraph*{Restarts.}
For smooth and strongly convex optimization problems (or more generally problems satisfying H\"olderian error bounds), accelerated methods with optimal complexity bounds require knowledge of the strong convexity constant to compute iterates \citep{Nest13b,Nest18}. In particular, \cite{Arje16} show that this information is necessary when using oblivious steps. This quantity can be hard to estimate and a lot of effort has been put in the development of adaptive optimization methods preserving fast convergence rates \citep{Lin14,Ferc16,Roul17}. All these works are based on restart strategies \citep{ODon15,Nest13b} and although they exhibit fast theoretical convergence rates, they often contain parameters that have to be tuned in order to get good practical results, or require additional information on the function itself (e.g., its minimum $f^*$). Once again, while on paper the complexity of restart schemes is nearly optimal, the presence of an outer loop generally limits their capacity to adapt to the function's local regularity and significantly affects empirical performance.

\paragraph*{Quasi-Newton methods.} 

An important family of adaptive algorithms is composed with quasi-Newton methods. As the name suggests, these methods try to mimic the behavior of Newton schemes, by constructing an estimate of the hessian at the current point, using previous gradients. The most notable quasi-Newton method is certainly L-BFGS \citep{Liu89}. These commonly used algorithms exhibit very fast empirical converge rates but only classical convergence rates comparable to that of gradient descent have been proven at this point \citep{Byrd87}.

\paragraph*{Conjugate gradient methods.} Conjugate gradient methods are probably among the most famous examples of adaptive algorithm. Firstly introduced for quadratic minimization~\citep{Hest52}, and motivated by nice theoretical guarantees (such as finite-time convergence), many variants have been introduced for going beyond quadratics~\citep{Flet64,Poly69b,Flet87}---see, for example, the nice survey~\citep{Hag06}. Roughly speaking, at each iteration, the method constructs an update direction based on the gradient at the current iterate, and on the knowledge of the previous search directions. The next iterate is obtained by line-search in the update direction. Whereas conjugate gradient methods are widely used in practice (e.g \cite{Rodi01,Volk04,Zhao15}), and perform very well when they applies, there are barely any non-asymptotic convergence guarantees for those methods beyond unconstrained quadratic minimization.

\paragraph*{Polyak step-sizes.}
When the optimal value of the objective function value is known, a well-known adaptive strategy consists in using the so-called ``Polyak step-sizes''---see e.g.,~\citep[Section 5.3.2]{polyak1987introduction} or~\citep{nedic2001incremental,Boyd03b}. The method consists in iterating gradient steps with step-sizes proportional to the primal gap at the current iterate. As opposed to most adaptive gradient methods mentioned above, this method comes with explicit theoretical properties, even beyond the quadratic optimization case.

\paragraph*{Barzilai-Borwein step-sizes.} The Barzilai-Borwein~\citep{barzilai1988two,Flet05} method consists in gradient steps with adaptive step-sizes. It is another case with complete theory for quadratic optimization, but barely any performance guarantees in non-quadratic cases (it is even known to diverge on some problem instances). 
\paragraph*{Adaptive gradient steps}
In \citep{Mali19} the authors developed a step-size policy that adapts to the local geometry, together with nice theoretical guarantees.

\subsection{Contributions} 
We develop and analyze an accelerated variant of the gradient method with Polyak steps that includes a momentum term and has better dependence on the condition number. We believe the Performance Estimation Program (PEP) technique used for obtaining the worst-case convergence guarantees is also of independent interest. As a byproduct, we also slightly improve convergence bounds for variants of the classical gradient method with Polyak steps (i.e. without momentum).  


\section{Classical Polyak Steps and Variants}
We denote $f_*$ the minimum of $f$. Let $0\leq \mu < L$, the class of $L$-smooth and $\mu$-strongly convex functions is denoted $\mathcal{F}_{\mu,L}$. Functions in this class satisfy (see e.g.,~\citep{Nest18}) $\forall x,y\in\reals^n$:
\begin{equation*}
    \begin{aligned}
     f(y) &\leq f(x) + \langle\nabla f(x),y-x \rangle + \tfrac{L}{2}\|y-x\|^2 \quad\text{(smoothness)},\\
     f(y) &\geq f(x) + \langle\nabla f(x),y-x \rangle + \tfrac{\mu}{2}\|y-x\|^2 \quad\text{(strong convexity)}.
    \end{aligned}
    \end{equation*}

Let us start with complexity bounds for gradient methods with Polyak steps for smooth and strongly convex optimization problems. Note that Polyak step sizes are usually discussed in the nondifferentiable setting---see \citep[Section 5.3.2]{polyak1987introduction} or \citep{nedic2001incremental,Boyd03b}. We first recall the complexity of the gradient method with Polyak steps in the smooth strongly convex case, then derive similar bounds for two variants. For the first variant, we scale the steps by a factor two compared to standard Polyak steps, yielding a simple convergence proof with slightly improved theoretical guarantees. The second variant is a descent method, where the complexity bound is written in terms of the primal gap. We delay a full discussion of the proof mechanisms to Section~\ref{sec:proofsmech}, and the proofs themselves to the appendix.
\begin{algorithm}[!ht]
\caption{Adaptive gradient method}
\label{algo:generic_Polyak}
\begin{algorithmic}
\STATE \algorithmicrequire\;$x_0 \in \reals^n$, $f_*\in \reals$
\FOR{$k \geq 0$}
\STATE compute $\gamma_k$
\STATE $x_{k+1} = x_k - \gamma_k\nabla f(x_k)$
\ENDFOR
\STATE \algorithmicensure\; $x_{k+1}$
\end{algorithmic}
\end{algorithm}\vspace{-.5cm}
\begin{align}
\text{Regular Polyak steps:}\quad &&&\gamma_k=\tfrac{f(x_k)-f_*}{\|\nabla f(x_k)\|^2} \label{eq:Polyakstep1}\tag{Polyak}\\
\text{Polyak steps, variant I:}\quad &&&\gamma_k=2\tfrac{f(x_k)-f_*}{\|\nabla f(x_k)\|^2}\label{eq:Polyakstep2}\tag{Variant I}\\
\text{Polyak steps, variant II:}\quad &&&\gamma_k = \left(2-\tfrac{\|\nabla f(x_k)\|^2}{2L(f(x_k)-f_*)}\right)/L\label{eq:Polyakstep3}\tag{Variant II}
\end{align}

The classical step size rule~\eqref{eq:Polyakstep1} was mostly studied in the nonsmooth convex case~\citep{polyak1987introduction}. For smooth strongly convex problems, it is known (see e.g.,~\citep{Haza19b}) that
\begin{equation}
f(x_N)-f_*\leq (1-\tfrac\mu{L})^N\tfrac{L\|x_0-x_*\|^2}{2} \label{eq:Polyak1Guarantee}.
\end{equation}

The two following propositions show that different step sizes policies (namely \eqref{eq:Polyakstep2} and \eqref{eq:Polyakstep3}) produce slightly improved convergence rates, matching the best known rates for gradient methods with known $\mu$ and $L$. The $\gamma_k$ are always well defined except when $x_k$ has a zero gradient, in this case we can simply stop the method since we have reached optimality. When it is well defined, $\gamma_k \in [\tfrac{1}{L},\tfrac{1}{\mu}]$ for \eqref{eq:Polyakstep2} and $\gamma_k \in [\tfrac{1}{L},\tfrac{2-\mu/L}{L}]$ for \eqref{eq:Polyakstep3}. First, let us state that if we seek to decrease the distance to the optimal point,~\eqref{eq:Polyakstep2} provides a rate that matches that of gradient descent with optimal (non-adaptive) step sizes~\citep{Nest18}. 

\begin{proposition}[Appendix~\ref{proof:polyak1}]\label{prop:polyak1}
Let $f \in \mathcal{F}_{\mu,L}$ and consider Algorithm~\ref{algo:generic_Polyak} with step-sizes~\eqref{eq:Polyakstep2}. Then, for any $x_0 \in \reals^n$ and $N \in \N$, such that the sequence $\{\gamma_k\}_k$ is well defined, it holds that 
\[\|x_N -x_*\|^2 \leq \left(\prod_{k=0}^{N-1}\rho(\gamma_k)\right)\|x_0-x_*\|^2,\]
where $\rho(\gamma) = \tfrac{(\gamma L -1)(1-\gamma\mu)}{\gamma(L+\mu)-1}$, and $\underset{\gamma \in [\tfrac{1}{L},\tfrac{1}{\mu}]}{\max}\; \rho(\gamma) = \tfrac{(L-\mu)^2}{(L+\mu)^2}$. Otherwise $\nabla f(x_k) = 0$ with $k\in[0,N]$.
\end{proposition}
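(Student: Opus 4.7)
My plan is to prove this by first establishing a one-step contraction of the form $\|x_{k+1}-x_*\|^2 \leq \rho(\gamma_k)\|x_k-x_*\|^2$, then iterating, and finally maximizing $\rho$ over the feasible range of $\gamma_k$.

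For the one-step analysis, I would expand
\[
\|x_{k+1}-x_*\|^2 = \|x_k-x_*\|^2 - 2\gamma_k\langle\nabla f(x_k),x_k-x_*\rangle + \gamma_k^2\|\nabla f(x_k)\|^2,
\]
and use the defining identity of the Polyak step $\gamma_k\|\nabla f(x_k)\|^2 = 2(f(x_k)-f_*)$ to rewrite $\gamma_k^2\|\nabla f(x_k)\|^2 = 2\gamma_k(f(x_k)-f_*)$. A short algebraic manipulation shows that $1-\rho(\gamma)=\gamma^2 L\mu / (\gamma(L+\mu)-1)$, so the target inequality reduces to the cleaner statement
\[
2\langle\nabla f(x_k),x_k-x_*\rangle \geq \gamma_k\|\nabla f(x_k)\|^2 + \tfrac{\gamma_k L\mu}{\gamma_k(L+\mu)-1}\|x_k-x_*\|^2.
\]

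The main obstacle is producing this lower bound on $\langle\nabla f(x_k),x_k-x_*\rangle$ tightly. Plain strong convexity and smoothness are not enough; I would invoke the two interpolation inequalities for $\mathcal{F}_{\mu,L}$ applied to the pair $(x_k,x_*)$ in both orderings (using $\nabla f(x_*)=0$), which in PEP language are the relevant interpolation constraints. The plan is then to take a convex combination of these two inequalities with multipliers $\alpha,\beta\geq 0$ chosen so that (i)~the coefficient of $\|\nabla f(x_k)\|^2$ equals $\gamma_k$ and (ii)~$\alpha+\beta$ matches the coefficient of $\langle\nabla f(x_k),x_k-x_*\rangle$. Solving this linear system yields $\alpha\propto L(\gamma_k L-1)$ and $\beta\propto \mu(1-\gamma_k\mu)$, both nonnegative precisely because $\gamma_k\in[1/L,1/\mu]$, and the coefficient of $\|x_k-x_*\|^2$ then works out to exactly $\gamma_k L\mu/(\gamma_k(L+\mu)-1)$ after simplification. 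This is the computation I expect to be the most error-prone step.

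To finish, I would verify that $\gamma_k\in[1/L,1/\mu]$ whenever $\nabla f(x_k)\neq 0$: this follows immediately from the standard bounds $\tfrac{1}{2L}\|\nabla f(x_k)\|^2 \leq f(x_k)-f_* \leq \tfrac{1}{2\mu}\|\nabla f(x_k)\|^2$ for $f\in\mathcal{F}_{\mu,L}$. Iterating the one-step contraction gives the product bound. For the worst-case value, I note that $\rho$ vanishes at both endpoints $1/L$ and $1/\mu$, so its maximum is attained at an interior critical point; differentiating (or checking $\gamma=2/(L+\mu)$ directly) yields $\rho(2/(L+\mu)) = (L-\mu)^2/(L+\mu)^2$, which by concavity/endpoint analysis is the global maximum on the interval.
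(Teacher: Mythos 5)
Your proposal is correct and follows essentially the same route as the paper's proof. The paper writes $\|x_{k+1}-x_*\|^2 - \rho(\gamma_k)\|x_k-x_*\|^2$ directly as a nonnegative combination of the two interpolation inequalities from Lemma~\ref{lem:interpol} (between $x_k,x_*$ in both orderings) plus the step-size identity $2(f(x_k)-f_*)=\gamma_k\|\nabla f(x_k)\|^2$ as a separate equality constraint with free-sign multiplier $\lambda_3$. You instead pre-substitute that identity after expanding $\|x_{k+1}-x_*\|^2$, and combine only the two interpolation inequalities --- this is the same certificate, just reorganized. Your multiplier formulas $\alpha\propto L(\gamma_k L-1)$, $\beta\propto\mu(1-\gamma_k\mu)$ correspond to the paper's $\lambda_1\propto(\gamma_k L-1)$, $\lambda_2\propto(1-\gamma_k\mu)$ once you account for the normalization of the interpolation inequalities (scaling each so its inner-product coefficient is one); both are equivalent and both yield the crucial nonnegativity on $[1/L,1/\mu]$. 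Your remark that the linear system has one redundant equation whose consistency must be checked is the right thing to be careful about, and it does hold. The endpoint/concavity argument for $\max_\gamma\rho(\gamma)=(L-\mu)^2/(L+\mu)^2$ at $\gamma=2/(L+\mu)$ and the bound $\gamma_k\in[1/L,1/\mu]$ from $\tfrac1{2L}\|\nabla f\|^2\le f-f_*\le\tfrac1{2\mu}\|\nabla f\|^2$ both match the paper.
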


If on the other hand we seek to decrease the primal gap,~\eqref{eq:Polyakstep3} provides a rate that matches that of gradient descent with exact line search~\citep{de2017worst}, at the expense of knowledge on L.
\begin{proposition}[Appendix~\ref{proof:polyak2}]\label{prop:polyak2}
Let $f \in \mathcal{F}_{\mu,L}$ and consider Algorithm~\ref{algo:generic_Polyak} with step-sizes~\eqref{eq:Polyakstep3}. Then, for any $x_0 \in \reals^n$ and $N \in \N$, such that the sequence $\{\gamma_k\}_k$ is well defined, it holds that 
\[f(x_N) -f_* \leq \left(\prod_{k=0}^{N-1}\rho(\gamma_k)\right)(f(x_0)-f_*),\]
where $\rho(\gamma) = (L\gamma - 1)\left(L\gamma(3-\gamma(L+\mu))-1\right)$, and $\underset{\gamma \in [\tfrac{1}{L},\tfrac{2L-\mu}{L^2}]}{\max}\; \rho(\gamma) = \tfrac{(L-\mu)^2}{(L+\mu)^2}$.\\Otherwise $\nabla f(x_k) = 0$ with $k \in [0,N]$.
\end{proposition}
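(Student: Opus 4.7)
The plan is to establish a per-iteration contraction $f(x_{k+1})-f_* \le \rho(\gamma_k)(f(x_k)-f_*)$; the product bound then follows by telescoping. Two preliminary observations organise the proof. First, when $\nabla f(x_k) \ne 0$ (otherwise we are at the optimum and stop), $L$-smoothness gives $\|\nabla f(x_k)\|^2 \le 2L(f(x_k)-f_*)$ while $\mu$-strong convexity gives $\|\nabla f(x_k)\|^2 \ge 2\mu(f(x_k)-f_*)$, so $\gamma_k$ is well defined and $L\gamma_k \in [1, 2-\mu/L]$ as claimed. Second, the defining relation of $\gamma_k$ is just a rewriting of the identity
\[
\|\nabla f(x_k)\|^2 = 2L(f(x_k)-f_*)(2-L\gamma_k),
\]
which turns any inequality involving $\|\nabla f(x_k)\|^2$ into one polynomial in $\gamma_k$ and homogeneous in $f(x_k)-f_*$.

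For the contraction itself I would follow the PEP-style logic that underlies Proposition~\ref{prop:polyak1}: find nonnegative multipliers on a small number of interpolation inequalities characterising $\mathcal{F}_{\mu,L}$ between the points $x_k$, $x_{k+1}$ and $x_*$ such that, after using the update $x_{k+1} = x_k - \gamma_k \nabla f(x_k)$ and the identity above, all terms involving $\|x_k-x_*\|^2$, $\|x_{k+1}-x_*\|^2$ and their associated inner products cancel, leaving exactly $f(x_{k+1})-f_* - \rho(\gamma_k)(f(x_k)-f_*) \le 0$. A genuine strong-convexity contribution is unavoidable: the smoothness descent lemma on the pair $(x_k,x_{k+1})$, combined with the identity, alone yields only the weaker factor $1 - L\gamma_k(2-L\gamma_k)^2$, which exceeds $\rho(\gamma_k)$ by $\mu L \gamma_k^2(L\gamma_k-1) \ge 0$ on the relevant range, so at least one interpolation inequality touching $x_*$ and carrying a $\mu$-dependent coefficient must appear in the certificate.

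The main obstacle is guessing these multipliers by hand, which is exactly what the PEP SDP machinery is designed to automate: one solves the one-step worst-case SDP numerically, observes that the optimal dual has small rational support, and reads off an analytic formula for the weights; the contraction then reduces to verifying a polynomial identity in $L\gamma_k$ and $\mu/L$. To conclude, the supremum of $\rho$ on $[1/L,(2L-\mu)/L^2]$ is a one-variable calculus check: $\rho$ vanishes at $1/L$, equals $(L-\mu)^4/L^4$ at $(2L-\mu)/L^2$, and has interior critical point $\gamma^\star = 2/(L+\mu)$, which lies in the range since $L \ge \mu$ and at which each of the two factors of $\rho$ evaluates to $(L-\mu)/(L+\mu)$, giving $\rho(\gamma^\star) = (L-\mu)^2/(L+\mu)^2$. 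The endpoint value $(L-\mu)^4/L^4$ is dominated by this one—the inequality is equivalent to $(L^2-\mu^2)^2 \le L^4$—so $\gamma^\star$ is the global maximiser and the advertised bound is attained.
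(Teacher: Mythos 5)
Your high-level plan coincides with the paper's: prove a one-step contraction $f(x_{k+1})-f_* \le \rho(\gamma_k)(f(x_k)-f_*)$ by combining interpolation inequalities with carefully chosen nonnegative multipliers, then telescope, and finally maximise $\rho$ over the admissible step-size range by ordinary calculus. Your calculus part is correct and essentially identical to the paper's (endpoint values $0$ and $(L-\mu)^4/L^4$, critical point $\gamma^\star = 2/(L+\mu)$, $\rho(\gamma^\star) = (L-\mu)^2/(L+\mu)^2$ dominating). Your preliminary observations are also correct: $L\gamma_k \in [1, 2-\mu/L]$ and the identity $\|\nabla f(x_k)\|^2 = 2L(f(x_k)-f_*)(2-L\gamma_k)$. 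Your remark that the smoothness descent lemma alone gives only the factor $1 - L\gamma_k(2-L\gamma_k)^2$, which exceeds $\rho(\gamma_k)$ by $\mu L\gamma_k^2(L\gamma_k-1)\ge 0$, is a nice sanity check and is algebraically correct.

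However, there is a genuine gap: you never actually produce the certificate. You write ``find nonnegative multipliers\dots'' and ``one solves the one-step worst-case SDP numerically, observes that the optimal dual has small rational support, and reads off an analytic formula for the weights; the contraction then reduces to verifying a polynomial identity'' --- but you do not state the multipliers, identify which interpolation inequalities enter, or verify the resulting identity. That identity is the whole content of the per-iteration bound; without it, the contraction is unproven. The paper's proof supplies exactly this missing piece. It uses the interpolation inequality of Lemma~\ref{lem:interpol} on three pairs and the step-size equality $(2L^2\gamma_k-4L)(f(x_k)-f_*)+\|\nabla f(x_k)\|^2=0$, with multipliers
\[
\lambda_1 = \gamma_k\,\mu\,(L\gamma_k - 1)\ \text{(pair $x_k,x_*$)},\quad
\lambda_2 = \gamma_k\,\mu\ \text{(pair $x_{k+1},x_*$)},
\]
\[
\lambda_3 = 1-\gamma_k\mu\ \text{(pair $x_{k+1},x_k$)},\quad
\lambda_4 = \tfrac{\gamma_k}{2}\bigl((L+\mu)\gamma_k-2\bigr)\ \text{(step-size equality)},
\]
all of which are nonnegative (or unsigned, for $\lambda_4$ on an equality) on $\gamma_k \in [1/L,\,(2-\mu/L)/L]$. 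Substituting $x_{k+1}=x_k-\gamma_k\nabla f(x_k)$, the weighted sum collapses to
\[
f(x_{k+1})-f_*-\rho(\gamma_k)(f(x_k)-f_*) + \tfrac{1}{2(L-\mu)}\bigl\|\nabla f(x_{k+1}) - L\mu\gamma_k(x_k-x_*) + (\gamma_k(L+\mu)-1)\nabla f(x_k)\bigr\|^2 \le 0,
\]
from which the contraction follows by dropping the nonnegative square. This explicit verification (or an equivalent one) is what your proposal must supply to be a proof rather than a plan.
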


In the following section, we study variants of those methods, where we aim to speed up convergence by incorporating a momentum term. Those methods follow in spirit the line of works on Nesterov's acceleration~\citep{Nest13b}, where we supersede knowledge of $\mu$ by that of $f_*$.
\section{Acceleration with Polyak momentum}\label{sec:acc}
In the following, AGM refers to the Accelerated Gradient Method with momentum introduced by Nesterov~\citep{Nest83b,Nest18}.
We are interested in optimizing a function $f \in \mathcal{F}_{\mu,L}$ without any information on the strong convexity constant $\mu$. However, as in the Polyak gradient method, we rely on the knowledge of $f^*$. We describe a single loop adaptive accelerated method (i.e. without restarts), with convergence rate of order $1-\left({\mu}/{L}\right)^{3/4}$, compared with $1-\mu/{L}$ for gradient descent, and $1-\left({\mu}/{L}\right)^{1/2}$ for its accelerated version with perfect knowledge of $\mu$.
\begin{algorithm}[!ht]
\caption{Accelerated gradient method (AGM)}
\label{algo:agdp}
\begin{algorithmic}
\STATE \algorithmicrequire\;$x_0 \in \reals^n$, $f_*\in \reals$, $L$ smoothness constant.
\STATE $y_0 = x_0$, 
\FOR{$k \geq 0$}
\STATE $y_{k+1} = x_k - \frac{1}{L}\nabla f(x_k)$
\STATE  compute $\tilde{\mu}_k$ and $\beta_k=\tfrac{\sqrt{L}-\sqrt{\tilde{\mu}_k}}{\sqrt{L}+\sqrt{\tilde{\mu}_k}}$
\STATE $x_{k+1} = y_{k+1} + \beta_k(y_{k+1}-y_k)$
\ENDFOR
\STATE \algorithmicensure\; $y_{k+1}$
\end{algorithmic}
\end{algorithm}
\begin{align}
\text{Constant momentum:}\hspace{0.1cm} &&&\tilde{\mu}_k=\mu \label{eq:costmom}\tag{Const-mom}&\\
\text{Polyak Acc., variant I:}\hspace{0.1cm} &&&\tilde{\mu}_k=\tfrac{\|\nabla f(y_{k+1})\|^2}{2(f(y_{k+1})-f_*)} ,\; \label{eq:Acc1}\tag{Acc. Variant I}&\\
\text{Polyak Acc., variant II:}\hspace{0.1cm} &&&\tilde{\mu}_k=\left\{\begin{array}{ll}
    +\infty \quad& \text{if } k=-1 \\
    \min\left(\tilde{\mu}_{k-1},\tfrac{\|\nabla f(y_{k+1})\|^2}{2(f(y_{k+1})-f_*)}\right) & \text{otherwise} 
\end{array}\right.\; \label{eq:Acc2}\tag{Acc. Variant II}&
\end{align}

Algorithm~\ref{algo:agdp} is based on the AGM algorithm \citep{Nest18}, in which the knowledge of $\mu$ is essential to set the constant momentum term $\beta_k=\beta_*=(\sqrt{L}-\sqrt{{\mu}})/(\sqrt{L}+\sqrt{{\mu}})$. Common convergence guarantees require  a lower bound on the strong convexity. As a first step towards producing adaptive versions of AGM, Lemma~\ref{lem:notbroken} and Corollary~\ref{cor:notbroken} below guarantee that AGM with any momentum factor $\beta_k$ in $[0,1]$ converges at least as fast as the classical gradient method.

\begin{lemma}[Convergence of AGM with bad momentum, Appendix~\ref{proof:notbroken}]\label{lem:notbroken}
Let $f \in \mathcal{F}_{\mu,L}$, some iteration number $k\in\N$, and consider Algorithm~\ref{algo:agdp} with $\beta_k \in [0,1]$. Then, for any $x_k,y_k \in \reals^n$, it holds that
\BEQ
V(x_{k+1},y_{k+1}) \leq \rho V(x_k,y_k)
\EEQ
where $V(x,y) = \frac{L-\mu}{2}\|x - y\|^2 + f(y)-f_* $ and $\rho = 1-\frac{\mu}{L}$.
\end{lemma}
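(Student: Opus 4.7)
First observe that since $x_{k+1} - y_{k+1} = \beta_k(y_{k+1}-y_k)$, we have
\[
V(x_{k+1},y_{k+1}) \;=\; \tfrac{L-\mu}{2}\beta_k^2\|y_{k+1}-y_k\|^2 + f(y_{k+1}) - f_*,
\]
which is monotonically non-decreasing in $\beta_k^2\in[0,1]$, while $\rho\, V(x_k,y_k)$ does not depend on $\beta_k$. Hence it suffices to prove the inequality in the worst case $\beta_k=1$.

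Let $g=\nabla f(x_k)$ and $a=x_k-y_k$, so that $y_{k+1}-y_k = a - g/L$. The proof rests on two ingredients. Strong convexity between $y_k$ and $x_k$ yields $f(y_k) \geq f(x_k) - \langle g,a\rangle + \tfrac{\mu}{2}\|a\|^2$; substituting into $V(x_k,y_k)$ and completing the square to absorb the cross term in $\langle g,a\rangle$ produces
\[
V(x_k,y_k) \;\geq\; \tfrac{L}{2}\|y_{k+1}-y_k\|^2 + (f(x_k)-f_*) - \tfrac{1}{2L}\|g\|^2.
\]

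The second ingredient is a strengthened descent inequality
\[
f(x_k) - f(y_{k+1}) \;\geq\; \tfrac{1}{2L}\|g\|^2 + \tfrac{\mu}{L-\mu}\,(f(y_{k+1})-f_*),
\]
which I would derive from the tight Taylor interpolation condition for $\mathcal{F}_{\mu,L}$ applied between $x_k$ and $y_{k+1}$. Using $y_{k+1}-x_k = -g/L$, that interpolation inequality simplifies to $f(x_k)-f(y_{k+1}) \geq \tfrac{1}{2L}\|g\|^2 + \tfrac{1}{2(L-\mu)}\|\nabla f(y_{k+1})\|^2$; combining with the Polyak--{\L}ojasiewicz bound $\|\nabla f(y_{k+1})\|^2 \geq 2\mu(f(y_{k+1})-f_*)$ (itself a consequence of $\mu$-strong convexity at $y_{k+1}$) yields the displayed inequality.

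Adding the two bounds gives $V(x_k,y_k) - \tfrac{L}{2}\|y_{k+1}-y_k\|^2 \geq \tfrac{L}{L-\mu}(f(y_{k+1})-f_*)$; multiplying by $(L-\mu)/L = \rho$ is precisely $V(x_{k+1},y_{k+1}) \leq \rho\, V(x_k,y_k)$ in the case $\beta_k=1$, and hence for all $\beta_k\in[0,1]$. The only non-routine step is establishing the strengthened descent lemma, where one must invoke the tight $\mathcal{F}_{\mu,L}$-interpolation condition rather than the usual smoothness descent lemma; once it is available, the remainder is algebraic.
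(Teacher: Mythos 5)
Your proof is correct, and it takes a genuinely different and somewhat cleaner route than the paper's. The paper certifies the inequality by the mechanical PEP recipe: a weighted sum of three full interpolation inequalities (between $x_k$ and $y_k$, between $y_{k+1}$ and $x_*$, and between $y_{k+1}$ and $x_k$, with multipliers $\rho$, $1-\rho$, $\rho$), carrying $\beta_k$ through the entire reformulation until its nonnegativity shows up as a residual $\tfrac{(1-\beta_k^2)\rho}{2L}\|\nabla f(x_k)+L(y_k-x_k)\|^2$. Your argument differs in two ways that both simplify matters. First, the observation that $V(x_{k+1},y_{k+1})$ is monotone nondecreasing in $\beta_k^2$ while the right-hand side is $\beta_k$-free lets you dispatch the whole $\beta_k\in[0,1]$ range by treating only $\beta_k=1$; the paper does not exploit this and instead pays for $\beta_k$ in the certificate. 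Second, you replace two of the paper's three tight interpolation inequalities by strictly weaker ones that suffice here: plain $\mu$-strong convexity at $x_k$ toward $y_k$ (instead of the full interpolation between $x_k$ and $y_k$) and the Polyak--{\L}ojasiewicz inequality at $y_{k+1}$ (instead of the full interpolation between $y_{k+1}$ and $x_*$). The only interpolation inequality you retain in full generality is the one between $x_k$ and $y_{k+1}$, which, after the cross-term cancellation you mention, yields
\[
f(x_k)-f(y_{k+1})\;\geq\;\tfrac{1}{2L}\|\nabla f(x_k)\|^2 + \tfrac{1}{2(L-\mu)}\|\nabla f(y_{k+1})\|^2;
\]
I checked this step and the final algebra, and both are exact. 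What you gain is a shorter, more legible, more ``by-hand'' proof with fewer and weaker ingredients; what the paper's derivation buys is that it was generated mechanically by the PEP methodology and exhibits the slack explicitly as a sum of three squared norms, which can be useful for tightness discussions but is not needed to establish the stated rate.
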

We then get the following corollary on the primal gap.
\begin{corollary}\label{cor:notbroken}
Let $f \in \mathcal{F}_{\mu,L}$, a number of iterations $N\in \N$, and consider Algorithm~\ref{algo:agdp} with a sequence $\{\beta_k\}_k$ satisfying $\beta_k\in\, [0,1]$ for all $k\in[1,N]$. Then, for any $x_0 \in \reals^n$, it holds that
\[ f(y_N) -f_* \leq \left(1 -\frac{\mu}{L} \right)^N(f(x_0)-f_*).\]
\end{corollary}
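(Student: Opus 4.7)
The plan is to derive the corollary essentially as a routine telescoping consequence of Lemma~\ref{lem:notbroken}, using the specific initialization $y_0=x_0$ and a lower bound on the Lyapunov function $V$.

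First, I would apply Lemma~\ref{lem:notbroken} at each iteration $k=0,1,\dots,N-1$, which is valid because the hypothesis $\beta_k\in[0,1]$ holds throughout by assumption. Chaining the one-step inequalities gives
\[
V(x_N,y_N)\ \leq\ \rho\, V(x_{N-1},y_{N-1})\ \leq\ \cdots\ \leq\ \rho^N\, V(x_0,y_0),
\]
with $\rho=1-\mu/L$.

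Next, I would evaluate the two endpoints. On the left, since $L\geq\mu$, the squared-norm term in $V$ is nonnegative, so $V(x_N,y_N)\geq f(y_N)-f_*$. On the right, the initialization in Algorithm~\ref{algo:agdp} imposes $y_0=x_0$, hence $\tfrac{L-\mu}{2}\|x_0-y_0\|^2=0$ and $V(x_0,y_0)=f(x_0)-f_*$. Combining these observations with the telescoped bound yields exactly
\[
f(y_N)-f_*\ \leq\ \left(1-\tfrac{\mu}{L}\right)^N (f(x_0)-f_*).
\]

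There is essentially no obstacle here: all the analytic work is already packed into Lemma~\ref{lem:notbroken}, whose point is precisely to identify $V$ as a Lyapunov function that contracts by $\rho$ under any momentum choice $\beta_k\in[0,1]$. The only things to be careful about are (i) that the lemma's hypothesis is satisfied at every step (which follows from the corollary's assumption on the whole sequence $\{\beta_k\}_k$), and (ii) that the initialization makes the ``distance'' part of $V(x_0,y_0)$ vanish, so that the right-hand side simplifies to the primal gap at $x_0$.
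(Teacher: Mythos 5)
Your proof is correct and is exactly the paper's argument, just spelled out: the paper simply says ``Direct from Lemma~\ref{lem:notbroken} with $x_0=y_0$,'' and your three steps (chain the one-step contraction, lower-bound $V(x_N,y_N)$ by the primal gap, use $y_0=x_0$ to collapse $V(x_0,y_0)$) are precisely what that shorthand unpacks to.
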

\begin{proof}
Direct from Lemma~\ref{lem:notbroken} with $x_0=y_0$.
\end{proof}

This result shows the robustness of AGM with respect to the momentum parameter. 
Adaptive strategies, that modify the momentum term in the algorithm automatically, thus at least enjoy the gradient method's convergence rate when $\beta_k$ is kept within the interval $[0,1]$---this is the case for both~\eqref{eq:Acc1} and~\eqref{eq:Acc2}. To our knowledge, only non-blowup properties \citep[Lemma 1]{Lin14} were known when overestimating $\mu$.

The momentum term in \eqref{eq:Acc1} was designed using the inverse of Polyak's step as an estimate of the strong convexity parameter. The motivation for this choice of strong convexity estimate is the fact that under some mild assumptions on $f$ (i.e., for quadratic or self-concordant $f$), the quantity $\frac{\|\nabla f(z_{k})\|^2}{2(f(z_{k})-f_*)}$ converges to the strong convexity constant at optimum when the $z_k$ are iterates of gradient descent algorithm with step-size 1/L.

 In order for $\tilde{\mu}_k$ to be always defined and within the interval $[\mu,L]$, we assume that iterates never reach exactly optimality. Under this condition we have $\beta_k \in [0,\beta_*]$ and Corollary~\ref{cor:notbroken} readily applies to both~\eqref{eq:Acc1} or~\eqref{eq:Acc2}. However, this result can be improved for those particular choices, as described in Lemma~\ref{lem:acc_classic} and Proposition~\ref{prop:acd_as_gd}, as the rate can be expressed in terms of the local $\tilde{\mu}_k$ instead of $\mu$.

\begin{lemma}[Appendix~\ref{proof:acc_classic}]\label{lem:acc_classic} Let $f \in \mathcal{F}_{0,L}$, some iteration number $k\in\N$, and consider Algorithm~\ref{algo:agdp} with either~\eqref{eq:Acc1} or~\eqref{eq:Acc2}. For any $x_k,y_k \in \reals^n$ such that $\tilde{\mu}_k$ well defined, it holds that
\BEQ
V(x_{k+1},y_{k+1}) \leq \rho(\tilde{\mu}_k) V(x_k,y_k)
\EEQ
where $V(x,y) = \frac{L}{2}\|x - y\|^2 + f(y)-f_* $ and $\rho(\tilde{\mu}) = \frac{1}{1+\frac{\tilde{\mu}}{L}}$. Otherwise $\nabla f(y_{k+1}) = 0$.
\end{lemma}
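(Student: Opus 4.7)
The plan is to establish the stated contraction as a weighted combination of three inequalities valid for any $f\in\mathcal{F}_{0,L}$, with multipliers $\rho$, $\rho$, and $1-\rho$, where I abbreviate $\rho:=\rho(\tilde{\mu}_k)=L/(L+\tilde{\mu}_k)$. Any $f\in\mathcal{F}_{0,L}$ satisfies $\|\nabla f(y)\|^2\leq 2L(f(y)-f_*)$ at every point $y$, so whenever $\nabla f(y_{k+1})\neq 0$ the quantity $\tilde{\mu}_k$ is well defined and lies in $(0,L]$ (immediate for \eqref{eq:Acc1}, by induction for \eqref{eq:Acc2}). By construction of $\tilde{\mu}_k$ one then also has the Polyak-type inequality
\[
\mathrm{(P)}\qquad f(y_{k+1})-f_*\;\leq\;\frac{\|\nabla f(y_{k+1})\|^2}{2\tilde{\mu}_k},
\]
with equality in \eqref{eq:Acc1}. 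The degenerate case $\nabla f(y_{k+1})=0$ yields $f(y_{k+1})=f_*$ and is handled separately.

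\textbf{Smoothness and convexity inequalities.} Set $g_k:=\nabla f(x_k)$, $\tilde{g}_k:=\nabla f(y_{k+1})$, $d_k:=x_k-y_k$, and note $y_{k+1}-x_k=-g_k/L$. Applying the interpolation inequality $f(u)\geq f(v)+\langle\nabla f(v),u-v\rangle+\tfrac{1}{2L}\|\nabla f(u)-\nabla f(v)\|^2$ valid on $\mathcal{F}_{0,L}$ at $u=x_k$, $v=y_{k+1}$, using $x_k-y_{k+1}=g_k/L$ and noting that the cross term $\langle g_k,\tilde{g}_k\rangle$ cancels, I obtain
\[
\mathrm{(S)}\qquad f(y_{k+1})-f(x_k)+\tfrac{1}{2L}\|g_k\|^2+\tfrac{1}{2L}\|\tilde{g}_k\|^2\;\leq\;0.
\]
Plain convexity at $(x_k,y_k)$ gives $\mathrm{(C)}\colon f(x_k)-f(y_k)-\langle g_k,d_k\rangle\leq 0$.

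\textbf{Combination and finishing step.} I form $\rho\cdot\mathrm{(S)}+\rho\cdot\mathrm{(C)}+(1-\rho)\cdot\mathrm{(P)}\leq 0$. The $f(x_k)$ terms cancel, and the coefficient of $\|\tilde{g}_k\|^2$ equals $\tfrac{\rho}{2L}-\tfrac{1-\rho}{2\tilde{\mu}_k}$, which vanishes exactly by the choice of $\rho$. What remains is
\[
(f(y_{k+1})-f_*)-\rho(f(y_k)-f_*)+\tfrac{\rho}{2L}\|g_k\|^2-\rho\langle g_k,d_k\rangle\;\leq\;0.
\]
Using $x_{k+1}-y_{k+1}=\beta_k(d_k-g_k/L)$, the target inequality $V(x_{k+1},y_{k+1})-\rho V(x_k,y_k)\leq 0$ rewrites as
\[
\tfrac{L(\beta_k^2-\rho)}{2}\|d_k\|^2-\beta_k^2\langle g_k,d_k\rangle+\tfrac{\beta_k^2}{2L}\|g_k\|^2+(f(y_{k+1})-f_*)-\rho(f(y_k)-f_*)\;\leq\;0.
\]
Subtracting the previous display from this target makes the $f$-terms disappear and collapses the remaining quadratic form to $\tfrac{\beta_k^2-\rho}{2L}\,\|Ld_k-g_k\|^2$, which is nonpositive as soon as $\beta_k^2\leq\rho$.

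\textbf{Main obstacle.} The only remaining algebraic check is $\beta_k^2\leq\rho$ whenever $\tilde{\mu}_k\leq L$. Setting $a=\sqrt{L}$, $b=\sqrt{\tilde{\mu}_k}$ one finds $\rho-\beta_k^2=\tfrac{b\bigl(a^2(4a-b)+b^2(2a-b)\bigr)}{(a^2+b^2)(a+b)^2}\geq 0$ for $a\geq b$. The creative part of the argument is really the choice of multipliers $(\rho,\rho,1-\rho)$ together with the precise form $\rho=L/(L+\tilde{\mu}_k)$, since it is exactly this pairing that kills the $\|\tilde{g}_k\|^2$ coefficient; such a combination is naturally extracted from a PEP dual certificate, which is the mechanism advertised earlier in the paper.
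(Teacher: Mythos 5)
Your proof is correct and follows essentially the same route as the paper's own argument. You combine the same three inequalities — the $\mathcal{F}_{0,L}$ interpolation inequality between $y_{k+1}$ and $x_k$, plain convexity between $x_k$ and $y_k$, and the Polyak-type estimate defining $\tilde{\mu}_k$ — with the same multipliers $(\rho,\rho,1-\rho)$ (your $(1-\rho)\cdot(\mathrm{P})$ is exactly the paper's $\lambda_3=\tfrac{1-\rho}{2\tilde{\mu}_k}$ applied to the unscaled constraint $2\tilde{\mu}_k(f(y_{k+1})-f_*)-\|\nabla f(y_{k+1})\|^2\leq 0$). Your residual $\tfrac{\beta_k^2-\rho}{2L}\|Ld_k-g_k\|^2$ is identically the paper's slack term in $\|\nabla f(x_k)+L(y_k-x_k)\|^2$, and the nonnegativity check $\rho-\beta_k^2\geq 0$ is the same polynomial inequality the paper proves via $g(x)=4\sqrt{x}-x+2x\sqrt{x}-x^2\geq 3\sqrt{x}-x\sqrt{x}\geq 0$; you merely factor it differently as $b\bigl(a^2(4a-b)+b^2(2a-b)\bigr)\geq 0$ for $a\geq b\geq 0$, which is an equally clean closing step.
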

\begin{proposition}\label{prop:acd_as_gd} Let $f \in \mathcal{F}_{0,L}$, some number of iterations $N\in\N$, and consider Algorithm~\ref{algo:agdp} with either~\eqref{eq:Acc1} or~\eqref{eq:Acc2}. Then, for any $x_0 \in \reals^n$, such that the sequence $\{\tilde{\mu}_k\}_k$ is well defined , it holds that 
\[f(y_N) -f_* \leq \left(\prod_{k=0}^{N-1}\rho(\tilde{\mu}_k)\right)(f(x_0)-f_*)\]
where $\rho(\tilde{\mu}) = \frac{1}{1+\frac{\tilde{\mu}}{L}}$. Otherwise $\nabla f(y_k) = 0$ with $k \in [0,N]$.
\end{proposition}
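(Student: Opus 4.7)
The plan is to obtain the claim by telescoping the one-step Lyapunov contraction provided by Lemma~\ref{lem:acc_classic}. Set $V(x,y) = \tfrac{L}{2}\|x-y\|^2 + f(y)-f_*$ as in that lemma, and assume throughout that $\{\tilde{\mu}_k\}_{k=0}^{N-1}$ is well defined (otherwise, as noted in the statement, some $\nabla f(y_k)=0$ and we are at optimality, so there is nothing to prove).

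First I would apply Lemma~\ref{lem:acc_classic} repeatedly for $k=0,1,\dots,N-1$ and chain the inequalities to get
\[
V(x_N,y_N)\;\leq\;\Bigl(\prod_{k=0}^{N-1}\rho(\tilde{\mu}_k)\Bigr)\,V(x_0,y_0).
\]
Since the algorithm initializes $y_0=x_0$, the first term of $V(x_0,y_0)$ vanishes and we simply have $V(x_0,y_0)=f(x_0)-f_*$. On the other side, dropping the nonnegative quadratic term yields $V(x_N,y_N)\geq f(y_N)-f_*$, which combined with the previous inequality delivers the desired bound.

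The only point that requires a small comment is the well-definedness and monotonicity of the momentum estimates: for \eqref{eq:Acc1} each $\tilde{\mu}_k$ depends only on $y_{k+1}$, so being well defined reduces to $\nabla f(y_{k+1})\neq 0$; for \eqref{eq:Acc2} the min with past estimates preserves well-definedness as long as the current gradient is nonzero. In both cases, once $\tilde{\mu}_k$ exists in $[0,L]$, the factor $\rho(\tilde{\mu}_k)=1/(1+\tilde{\mu}_k/L)\in[1/2,1]$ is well defined, so the product in the bound makes sense.

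There is essentially no hard step here: the argument is a pure telescoping of Lemma~\ref{lem:acc_classic}, and the main work was already done in proving that lemma. The only thing I would be careful about is to state explicitly the boundary case where $\nabla f(y_{k+1})=0$ for some $k<N$, in which case the algorithm can be stopped (with $f(y_{k+1})=f_*$) and the conclusion holds trivially; this matches the ``otherwise'' clause of the proposition.
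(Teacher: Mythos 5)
Your proof is correct and follows the same route as the paper's one-line argument: telescope Lemma~\ref{lem:acc_classic}, use $y_0=x_0$ so that $V(x_0,y_0)=f(x_0)-f_*$, and drop the nonnegative quadratic term to lower-bound $V(x_N,y_N)$ by $f(y_N)-f_*$. Your extra remarks on well-definedness and the boundary case are consistent with the proposition's ``otherwise'' clause and add nothing problematic.
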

\begin{proof}
Use Lemma~\ref{lem:acc_classic} recursively and notice that $V(x_0,y_0) = f(x_0)-f_*$.
\end{proof}

In fact, these results on~\eqref{eq:Acc1} and~\eqref{eq:Acc2} also hold under H\"olderian error bounds~\citep{Bolt07,bolte2017error} (also known as Kurdyka-{\L}ojasewicz, Polyak-{\L}ojasewicz, quadratic growth, etc.)  which require the existence of $\mu >0$ such that for all $x\in \reals^n$, $f(x)-f_* \leq \tfrac{1}{2\mu}\|\nabla f(x)\|^2$. This condition holds in particular for strongly convex function but is much weaker.

\begin{corollary}\label{cor:acd_as_gd}
Under the conditions of Proposition~\ref{prop:acd_as_gd}, if there exists $\mu > 0$ such that for all $x \in \reals^n$, $f(x)-f_*\leq \frac{1}{2\mu}\|\nabla f(x)\|^2$ then after $N \in \N$ iterations
\[f(y_N) -f_* \leq \left(1+\frac{\mu}{L}\right)^{-N}(f(x_0)-f_*).\] 
\end{corollary}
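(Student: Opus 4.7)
The plan is to chain Proposition~\ref{prop:acd_as_gd} with a uniform lower bound on $\tilde{\mu}_k$ coming from the H\"olderian error bound. Concretely, Proposition~\ref{prop:acd_as_gd} gives
\[
f(y_N) - f_* \leq \left(\prod_{k=0}^{N-1} \rho(\tilde{\mu}_k)\right)(f(x_0)-f_*), \qquad \rho(\tilde{\mu})=\tfrac{1}{1+\tilde{\mu}/L},
\]
so the only remaining task is to show $\tilde{\mu}_k \geq \mu$ for every $k$, since $\rho$ is decreasing.

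First I would unfold the definition of $\tilde{\mu}_k$. For~\eqref{eq:Acc1}, $\tilde{\mu}_k = \tfrac{\|\nabla f(y_{k+1})\|^2}{2(f(y_{k+1})-f_*)}$, and the assumed bound $f(x)-f_* \leq \tfrac{1}{2\mu}\|\nabla f(x)\|^2$ applied at $x = y_{k+1}$ rearranges to exactly $\tilde{\mu}_k \geq \mu$ (the denominator is positive because we are in the regime where $\tilde{\mu}_k$ is well defined, i.e., $\nabla f(y_{k+1})\neq 0$, and positivity of $f(y_{k+1})-f_*$ follows from the error bound itself). For~\eqref{eq:Acc2}, $\tilde{\mu}_k = \min(\tilde{\mu}_{k-1}, \tfrac{\|\nabla f(y_{k+1})\|^2}{2(f(y_{k+1})-f_*)})$; by induction each of the quantities being minimized is $\geq \mu$, hence $\tilde{\mu}_k \geq \mu$ as well (the base case $\tilde{\mu}_{-1} = +\infty \geq \mu$ is immediate).

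Finally, since $\rho$ is monotonically decreasing on $(0,+\infty)$, $\rho(\tilde{\mu}_k) \leq \rho(\mu) = (1+\mu/L)^{-1}$, and plugging back into the product yields the claimed bound
\[
f(y_N)-f_* \leq \left(1+\tfrac{\mu}{L}\right)^{-N}(f(x_0)-f_*).
\]
There is no real obstacle here: the corollary is essentially a one-line consequence of Proposition~\ref{prop:acd_as_gd} once one notices that the H\"olderian error bound is precisely the statement that the Polyak-type estimator $\|\nabla f(\cdot)\|^2/(2(f(\cdot)-f_*))$ is globally lower-bounded by $\mu$. The only mild care needed is handling the Variant~II case by induction and confirming that the ``otherwise $\nabla f(y_k)=0$'' escape clause of Proposition~\ref{prop:acd_as_gd} is compatible with (in fact trivializes) the stated inequality.
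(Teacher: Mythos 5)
Your proof is correct and matches the argument the paper implicitly intends (the corollary is stated without proof): apply Proposition~\ref{prop:acd_as_gd}, observe that the error bound evaluated at $y_{k+1}$ gives $\tilde{\mu}_k\geq\mu$ for Variant~I (and by the induction you describe for Variant~II), and use monotonicity of $\rho$. The one tiny imprecision is the remark that positivity of $f(y_{k+1})-f_*$ ``follows from the error bound itself''; it actually follows from convexity (a nonzero gradient means $y_{k+1}$ is not a minimizer), not from the error bound, but this does not affect the validity of the argument.
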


Looking at Proposition~\ref{prop:acd_as_gd} more closely, we notice that when the estimates $\tilde{\mu}_k$ are larger than $\sqrt{L\mu}$, the adaptive accelerated method exhibits an accelerated linear convergence rate $O(1-\sqrt{\tfrac{\mu}{L}})$. It remains to study the convergence of the adaptive method in the regime where $\tilde{\mu}_k$ is small. In this case, we provide another robustness result for the AGM algorithm when the momentum $\beta_k$ is close enough to its classical value~\eqref{eq:costmom}.

\begin{lemma}[Appendix~\ref{proof:acd_34}]\label{lem:acd_34} Let $f \in \mathcal{F}_{\mu,L}$, some iteration number $k\in\N$, and consider Algorithm~\ref{algo:agdp} with 
\[\tfrac{\sqrt{L}-\sqrt[4]{L\mu}}{\sqrt{L}+\sqrt[4]{L\mu}}\leq \beta_k \leq \beta_*=\tfrac{\sqrt{L}-\sqrt{\mu}}{\sqrt{L}+\sqrt{\mu}}.\]
Then, for any $x_k,y_k \in \reals^n$, it holds that
\BEQ
V(x_{k+1},y_{k+1}) \leq \rho V(x_k,y_k)
\EEQ
where $V(x,y) = \frac{L}{2}\|\frac{1}{\sqrt{\rho}}(x-x_*) - \sqrt{\rho}(y-x_*)\|^2 + f(y)-f_* $ and $\rho = \left(1+\left(\frac{\mu}{L}\right)^{\frac{3}{4}}\right)^{-1}$.
\end{lemma}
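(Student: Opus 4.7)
The plan is to establish the one-step contraction by a Lyapunov analysis of PEP type, expressing $V(x_{k+1},y_{k+1})-\rho V(x_k,y_k)$ as a nonnegative combination of a handful of basic inequalities valid for every $f\in\mathcal{F}_{\mu,L}$, plus a nonpositive quadratic residual. This parallels the standard analysis of AGM (as used implicitly in Lemma~\ref{lem:notbroken}), but with a Lyapunov function whose weights $1/\sqrt{\rho}$ and $\sqrt{\rho}$ are calibrated to the non-standard rate $\rho=(1+(\mu/L)^{3/4})^{-1}$.

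First I would change variables: set $p = y_{k+1}-x_*$, $q = y_k-x_*$, and $g=\nabla f(x_k)$. The updates give $x_k-x_* = p+g/L$ and $x_{k+1}-x_* = (1+\beta_k)\,p - \beta_k\,q$. Substituting and writing $s:=1/\sqrt{\rho}$, the target inequality becomes
\[
\tfrac{L}{2}\bigl\|(s(1+\beta_k)-s^{-1})\,p - s\beta_k\,q\bigr\|^2 + f(y_{k+1})-f_* \;\leq\; \tfrac{L}{2}\bigl\|p + g/L - \rho\,q\bigr\|^2 + \rho\,(f(y_k)-f_*).
\]
I would then introduce three standard inequalities for $f\in\mathcal{F}_{\mu,L}$: $(\mathrm{I}_1)$ smoothness applied to the gradient step, $f(y_{k+1}) \leq f(x_k) - \tfrac{1}{2L}\|g\|^2$; $(\mathrm{I}_2)$ strong convexity at $x_k$ for $y_k$, $f(y_k) \geq f(x_k) + \langle g, y_k-x_k\rangle + \tfrac{\mu}{2}\|y_k-x_k\|^2$; and $(\mathrm{I}_3)$ strong convexity at $x_k$ for $x_*$, $f_* \geq f(x_k) + \langle g, x_*-x_k\rangle + \tfrac{\mu}{2}\|x_*-x_k\|^2$. (Should the simple strong-convexity inequalities prove insufficient, I would upgrade them to the full interpolation inequalities of Taylor--Hendrickx--Glineur, which incorporate the additional gradient variables.)

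Matching the four function-value terms $f(y_{k+1}), f(x_k), f(y_k), f_*$ uniquely determines the multipliers $\lambda_1=1$, $\lambda_2=\rho$, $\lambda_3=1-\rho$, all nonnegative for $\rho\in[0,1]$. With this choice the function values cancel exactly, and the identity
\[
V(x_{k+1},y_{k+1}) - \rho\,V(x_k,y_k) \;=\; \lambda_1\,(\mathrm{I}_1) + \lambda_2\,(\mathrm{I}_2) + \lambda_3\,(\mathrm{I}_3) + R(p,q,g),
\]
reduces the lemma to showing that the purely quadratic residual $R(p,q,g)$ is nonpositive. Since $R$ is a homogeneous quadratic in the three vectors $(p,q,g)$, this is equivalent to the negative semidefiniteness of an explicit $3\times 3$ symmetric matrix $M(\beta_k;\mu,L,\rho)$ obtained by identifying the three vectors with their coordinate representatives in any fixed direction.

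The main obstacle is the algebraic verification that $M(\beta_k;\mu,L,\rho)\preceq 0$ throughout the interval $\beta_k\in\bigl[(\sqrt L-\sqrt[4]{L\mu})/(\sqrt L+\sqrt[4]{L\mu}),\,\beta_*\bigr]$. The entries of $M$ are affine in $\beta_k$, so in principle one need only check the two endpoints; yet at each endpoint the verification is delicate, since the rate $\rho=(1+(\mu/L)^{3/4})^{-1}$ is precisely the threshold for which $M$ becomes singular with a rank-one kernel. Concretely, $(\mu/L)^{3/4}$ is the log-scale geometric mean of the gradient rate $(\mu/L)$ and the accelerated rate $(\mu/L)^{1/2}$, and it emerges from the requirement that a single Lyapunov function contracts uniformly across the full momentum interval. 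I would check the endpoint $\beta_k=\beta_*$ first (where the algebra is closest to the classical Nesterov identity) and then the lower endpoint (where the momentum corresponds to overestimating $\mu$ by $\sqrt{L\mu}$), producing in each case an explicit sum-of-squares decomposition of $-R$ — a symbolic computation best carried out by the PEP framework invoked in Section~\ref{sec:proofsmech}.
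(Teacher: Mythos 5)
Your outline captures the core of the paper's proof: write $V(x_{k+1},y_{k+1})-\rho V(x_k,y_k)$ as a valid nonnegative combination of interpolation-type inequalities, determine the multipliers by matching function values, and verify that the remaining quadratic residual is nonpositive. Your multipliers $\lambda_1=1$, $\lambda_2=\rho$, $\lambda_3=1-\rho$ are exactly those the paper uses. The substitution $p=y_{k+1}-x_*$, $q=y_k-x_*$, $g=\nabla f(x_k)$ is also the right reduction. So the skeleton is sound. There are two issues, one of which is a genuine gap.

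First, the inequalities you propose are strictly weaker than the ones the paper actually needs. The paper's weights $1$, $1-\rho$, $\rho$ are attached to: the full Taylor--Hendrickx--Glineur interpolation inequality between $y_{k+1}$ and $x_k$; the full interpolation inequality between $x_k$ and $x_*$; and \emph{plain convexity} (not strong convexity) between $x_k$ and $y_k$. Your $(\mathrm{I}_1)$ is implied by the paper's first inequality, and the slack you discard (a multiple of $\|\nabla f(y_{k+1})\|^2$) is also discarded in the paper's final step, so that substitution is harmless. But your $(\mathrm{I}_3)$ is only the strong-convexity lower bound, whereas the paper's interpolation inequality between $x_k$ and $x_*$ equals your $(\mathrm{I}_3)$ \emph{plus} the extra nonnegative term $\tfrac{1}{2(L-\mu)}\|\nabla f(x_k)-\mu(x_k-x_*)\|^2$, and the paper's final residual does make use of this (it is where the $\|x_k-x_*-\tfrac1L\nabla f(x_k)\|^2$ term in the explicit decomposition comes from). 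You flag the possibility of upgrading, which is the right instinct, but with your initial inequalities the residual is unlikely to come out nonpositive at the advertised rate $\rho=(1+(\mu/L)^{3/4})^{-1}$, which is tight. The paper also deliberately uses \emph{plain} convexity (not strong convexity) between $x_k$ and $y_k$; adding the $\tfrac{\mu}{2}\|y_k-x_k\|^2$ term as you do changes the Gram matrix and will not reproduce the paper's identity.

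Second, and this is the concrete error: you claim the entries of $M(\beta_k)$ are affine in $\beta_k$. They are not. Because $x_{k+1}-x_*=(1+\beta_k)p-\beta_k q$ enters $V(x_{k+1},y_{k+1})$ inside a squared norm, the residual is \emph{quadratic} in $\beta_k$, with $\beta_k^2$ coefficient proportional to $\tfrac{L}{2\rho}\|p-q\|^2$. Your stated justification for reducing to the two endpoints therefore does not apply. The reduction can still be salvaged, but by a different argument: the $\beta_k^2$ coefficient is a nonnegative multiple of $\|p-q\|^2$, so $v^\top M(\beta_k)v$ is \emph{convex} in $\beta_k$ for every fixed $v$, and its maximum on the interval is indeed at an endpoint. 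The paper does not reason via $M(\beta_k)\preceq 0$ at all; it produces an explicit SOS decomposition of the residual whose scalar coefficients are concave quadratics in $\beta_k$, and verifies nonnegativity of those coefficients at the two endpoints by reducing to univariate polynomial inequalities in $\kappa^{1/4}$ (where $\kappa=\mu/L$). That endpoint verification is the bulk of the appendix, and your proposal stops at the point where it would begin; the claim that the endpoint computations can be "best carried out by the PEP framework" is an appeal to numerics, not a proof.

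In short: right strategy, right multipliers, right reduction to endpoints (for a reason you misstate), but the wrong initial inequality for the $x_k\leftrightarrow x_*$ coupling, the wrong term for $x_k\leftrightarrow y_k$, an incorrect "affine" claim, and no execution of the decisive polynomial certificates.
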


This lemma guarantees a linear convergence rate $O\left(1-\left(\tfrac{\mu}{L} \right)^{3/4}\right)^k$ that is slower than the accelerated rate with full knowledge of $\mu$ but faster than the gradient rate. We now combine the convergence results for the two regimes of $\tilde{\mu}_k$, and get a global linear convergence rate for~\eqref{eq:Acc2}.

\begin{proposition}[Appendix~\ref{proof:34}]\label{prop:34} Let $f \in \mathcal{F}_{\mu,L}$, and $N\in\N$ be a number of iterations. We consider Algorithm~\ref{algo:agdp} with~\eqref{eq:Acc2}, and let $\{y_k,x_k\}_k$ be the iterates of the method. Then, for any $x_0 \in \reals^n$, such that the sequence $\{\tilde{\mu}_k\}_k$ is well defined, we let $m\in N$ be the first integer such that $\frac{\|\nabla f(y_{m+1})\|^2}{2(f(y_{m+1})-f_*)} \leq \sqrt{L\mu}$, (let $m = \infty$ if this never happens during the $N$ iterations), 
\[ 
f(y_N) - f_* \leq \left\{ \begin{array}{ll}
    \rho_1^{N}\left(\frac{L}{2}\left(\frac{1}{\sqrt{\rho_1}} - \sqrt{\rho_1}\right)^2\|x_0-x_*\|^2  + f(x_0)-f_* \right)\quad & \text{ if } m = 0,\\
     \rho_2^{N}(f(x_0)-f_*) & \text{ if } m = \infty,\\
     C\rho_1^{N-m}\rho_2^{m}(f(x_0) -f_*)& \text{ otherwise,}
\end{array}\right.
\]
where $C =  \left(\left(\tfrac{1}{\rho_1}-1\right)\left(1 + \sqrt{\tfrac{L}{2\mu}} \right)^2 +1\right)$, $\rho_1 = \left(1+\left(\tfrac{\mu}{L}\right)^{\tfrac{3}{4}}\right)^{-1}$ and $\rho_2 = \left(1+\sqrt{\tfrac{\mu}{L}}\right)^{-1}$. \\Otherwise $\nabla f(y_k) = 0$ with $k \in [0,N]$.
\end{proposition}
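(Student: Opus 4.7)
The plan is to exploit the monotonicity of the sequence $\{\tilde{\mu}_k\}$ enforced by~\eqref{eq:Acc2} and handle the $N$ iterations in two phases governed by two different Lyapunov functions. Before splitting into cases, I would record that $\mu\leq\tilde{\mu}_k\leq L$ for every $k$: the upper bound follows from smoothness ($\|\nabla f(y)\|^2\leq 2L(f(y)-f_*)$) and the lower bound from strong convexity, which yields the Polyak--\L ojasiewicz inequality $f(y)-f_*\leq\|\nabla f(y)\|^2/(2\mu)$. Combined with the non-increasingness forced by the $\min$ in~\eqref{eq:Acc2}, this controls where $\beta_k=(\sqrt{L}-\sqrt{\tilde\mu_k})/(\sqrt{L}+\sqrt{\tilde\mu_k})$ sits at every iteration: namely $\tilde{\mu}_k\leq\sqrt{L\mu}$ corresponds to the interval $\beta_k\in[(\sqrt{L}-\sqrt[4]{L\mu})/(\sqrt{L}+\sqrt[4]{L\mu}),\beta_*]$ required by Lemma~\ref{lem:acd_34}.

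For the boundary case $m=0$, monotonicity propagates $\tilde{\mu}_k\leq\sqrt{L\mu}$ to all $k\geq 0$, so each $\beta_k$ lies in the Lemma~\ref{lem:acd_34} interval; iterating that lemma $N$ times and using $y_0=x_0$ to compute $V_{\mathrm{new}}(x_0,y_0)=\tfrac{L}{2}\bigl(\tfrac{1}{\sqrt{\rho_1}}-\sqrt{\rho_1}\bigr)^2\|x_0-x_*\|^2+f(x_0)-f_*$ gives the first branch. For $m=\infty$, every $\tilde{\mu}_k$ strictly exceeds $\sqrt{L\mu}$, hence $\rho(\tilde{\mu}_k)<\rho_2$, and Proposition~\ref{prop:acd_as_gd} yields the second branch directly.

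In the general case $0<m<\infty$ the argument is two-stage. Over $k=0,\ldots,m-1$ I would iterate Lemma~\ref{lem:acc_classic} with the classic Lyapunov $V_{cl}(x,y)=\tfrac{L}{2}\|x-y\|^2+f(y)-f_*$ to obtain $V_{cl}(x_m,y_m)\leq\rho_2^m V_{cl}(x_0,y_0)=\rho_2^m(f(x_0)-f_*)$. Over $k=m,\ldots,N-1$, monotonicity keeps $\tilde{\mu}_k\leq\sqrt{L\mu}$ while strong convexity keeps $\tilde{\mu}_k\geq\mu$, so Lemma~\ref{lem:acd_34} applies at each step and gives $f(y_N)-f_*\leq V_{\mathrm{new}}(x_N,y_N)\leq\rho_1^{N-m}V_{\mathrm{new}}(x_m,y_m)$.

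The main technical obstacle is bridging the two Lyapunovs at step $m$, i.e.\ controlling $V_{\mathrm{new}}(x_m,y_m)$ by a multiple of $V_{cl}(x_m,y_m)$. Writing $u=x_m-x_*$ and $v=y_m-x_*$, my plan is to exploit the identity $u-\rho_1 v=(1-\rho_1)u+\rho_1(u-v)$ and apply convexity of $\|\cdot\|^2$ with the convex weights $(1-\rho_1,\rho_1)$, which yields
\[ V_{\mathrm{new}}(x_m,y_m)\leq \left(\tfrac{1}{\rho_1}-1\right)\tfrac{L}{2}\|x_m-x_*\|^2+V_{cl}(x_m,y_m). \]
A triangle inequality $\|x_m-x_*\|\leq\|x_m-y_m\|+\|y_m-x_*\|$, combined with $\tfrac{L}{2}\|x_m-y_m\|^2\leq V_{cl}(x_m,y_m)$ and the strong-convexity bound $\tfrac{\mu}{2}\|y_m-x_*\|^2\leq f(y_m)-f_*\leq V_{cl}(x_m,y_m)$, optimized via a Young-type inequality tuned to produce the factor $(1+\sqrt{L/(2\mu)})^2$, then delivers the constant $C$ of the statement. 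Substituting $V_{cl}(x_m,y_m)\leq\rho_2^m(f(x_0)-f_*)$ and multiplying by $\rho_1^{N-m}$ finishes the third branch. The hard part is algebraic: identifying the right convex-combination identity so that Jensen on $\|\cdot\|^2$ produces a clean bound, then tuning the Young inequality so as to recover the sharp constant $C$ rather than a looser expression in $L/\mu$.
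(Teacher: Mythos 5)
Your proposal follows the paper's own proof essentially step by step: the same three-case split, the same two-phase decomposition driven by the monotonicity of $\{\tilde\mu_k\}$ (applying Lemma~\ref{lem:acc_classic} with the classic Lyapunov $V_{cl}$ on iterations $0,\dots,m-1$ and Lemma~\ref{lem:acd_34} with the shifted Lyapunov on iterations $m,\dots,N-1$), and the same bridge between the two Lyapunov functions at step $m$. Your Jensen step on $\|\cdot\|^2$ with convex weights $(1-\rho_1,\rho_1)$ applied to $u-\rho_1 v=(1-\rho_1)u+\rho_1(u-v)$ is precisely the paper's exact expansion of the squared norm followed by dropping the nonpositive term $-\tfrac{L}{2}(1-\rho_1)\|y_m-x_*\|^2$, and the subsequent triangle inequality and substitution of the two bounds $\sqrt{L/2}\,\|x_m-y_m\|\leq\rho_2^{m/2}\sqrt{f(x_0)-f_*}$ and $\sqrt{f(y_m)-f_*}\leq\rho_2^{m/2}\sqrt{f(x_0)-f_*}$ are identical. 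One small caution worth double-checking: the paper does not optimize a Young parameter; it simply writes $\sqrt{L/2}\,\|y_m-x_*\|\leq\sqrt{L/(2\mu)}\sqrt{f(y_m)-f_*}$ and substitutes. With the standard strong-convexity bound $\tfrac{\mu}{2}\|y_m-x_*\|^2\leq f(y_m)-f_*$ that you quote, either route (your Young optimization at $\theta=\sqrt{L/\mu}$, or direct substitution) yields $\bigl(1+\sqrt{L/\mu}\bigr)^2$ rather than $\bigl(1+\sqrt{L/(2\mu)}\bigr)^2$ in the constant $C$; this is a factor to re-examine in the final statement but has no effect on the rate $\rho_1^{N-m}\rho_2^{m}$.
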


The previous convergence bound is only valid for \eqref{eq:Acc2} mostly for technical reasons. Indeed the min is present in order to have at most one transition between the regime $\tilde{\mu}_k \geq \sqrt{L\mu}$ and $\tilde{\mu}_k \leq \sqrt{L\mu}$. In practice, however, we didn't observe any difference between the behaviours of~\eqref{eq:Acc1} and that of~\eqref{eq:Acc2}.

\section{Proof mechanisms}\label{sec:proofsmech}


Starting with the work of \cite{Dror14}, computer-aided worst-case analyses of convex optimization methods have provided a generic technique producing convergence rates for many classical first-order algorithms. The results in \citep{Dror14,Tayl17} use an interpolation argument to write the problem of finding the worst case behavior of an algorithm, given a convergence criterion, as a tractable semidefinite program---often referred to as a Performance Estimation Program (PEP). We adapted the technique for generating the complexity bounds on gradient methods with Polyak steps. 

Our proofs were obtained by searching for Lyapunov (or potential) functions (see e.g.~\citep{bansal2019potential} for a recent survey). Due to space constraints, we do not detail how these potentials were obtained here, and refer the reader to the discussions on PEPs in~\citep{pmlr-v99-taylor19a,pmlr-v80-taylor18a} for more details. A related line of works (equivalent in many situations) is that of integral quadratic constraints~\citep{lessard2016analysis}, which leverage results from control theory to perform worst-case complexity analysis. All these approaches were originally developed for non adaptive methods and in what follows, we show how we used the PEP approach for adaptive algorithms. A similar reasoning would allow adapting IQCs for adaptive methods as well.

To fix ideas and illustrate our procedure, we first analyze the worst case complexity of a variant of the classical gradient method with Polyak steps, and show improved convergence bounds compared to classical results (see \cite{Haza19b} for a recent treatment). 
We consider the gradient method with Polyak steps described in Algorithm~\ref{algo:generic_Polyak} with~\eqref{eq:Polyakstep2} for $f \in \mathcal{F}_{\mu,L}$. Notice that there is a factor two in the step-size that is not present in the original Polyak step. This factor simplifies, and improves, the analysis for the convergence in terms of distance to the optimum.


To prove a linear convergence rate, we can focus on the improvement yielded by a single iteration of the form
\begin{equation}\label{eq:poly2iter}
        x_{k+1} := x_k - \gamma_k\nabla f(x_k),
        \quad \mbox{where} \quad
        \gamma_k := 2\frac{f(x_k) - f_*}{\|\nabla f(x_k)\|^2}.
\end{equation}
We seek to bound the worst case (i.e., smallest) decrease in $\|x_{k+1}-x_*\|^2$ relative to $\|x_k-x_*\|^2$ when $x_{k+1}$ is obtained using the iteration in~\eqref{eq:poly2iter} for any function $f \in \mathcal{F}_{\mu,L}$ and any point $x_k$. In other words we seek to solve the following optimization problem
\begin{equation}\label{eq:inf-interp}
\begin{array}{ll}
     \mbox{maximize}& \dfrac{\|x_{k+1}-x_*\|^2}{\|x_k-x_*\|^2}  \\
     \mbox{subject to} & x_{k+1} = x_k - 2\frac{f(x_k)-f_*}{\|\nabla f(x_k)\|^2}\nabla f(x_k),\\
                        & f \in \mathcal{F}_{\mu,L},\; x_k \in \reals^n.
\end{array} 
\end{equation}
in the variables $f \in \mathcal{F}_{\mu,L}$ and $x_k,x_{k+1},x_*,\nabla f(x_k) \in \reals^n$, with parameter $f^*\in\reals$. The following lemma from \citep{Tayl17} shows necessary conditions satisfied by any function $f \in \mathcal{F}_{\mu,L}$.
\begin{lemma}\citep[Theorem 4]{Tayl17}\label{lem:interpol}
Given $f \in \mathcal{F}_{\mu,L}$, for any $(x,y)\in \reals^n \times \reals^n$
\begin{equation*}
\begin{aligned}
 &f(x) - f(y) + \nabla f(x)^T(y-x) + \tfrac{1}{2L}\|\nabla f(x)-\nabla f(y)\|^2\\&+\tfrac{\mu}{2(1-\tfrac{\mu}{L})}\|x-y-\tfrac{1}{L}(\nabla f(x) -\nabla f(y))\|^2 \leq 0
\end{aligned}
\end{equation*} 
\end{lemma}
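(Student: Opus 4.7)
The plan is to deduce the inequality from standard co-coercivity for convex $L$-smooth functions, via the classical reduction of $\mu$-strongly convex $L$-smooth functions to $0$-convex $(L-\mu)$-smooth ones. That is, I would set $g(z):=f(z)-\tfrac{\mu}{2}\|z\|^2$, so that $\nabla g(z)=\nabla f(z)-\mu z$, and use the two defining inequalities of $\mathcal{F}_{\mu,L}$ (subtracting $\tfrac{\mu}{2}\|y-x\|^2$ from both) to check that $g$ is convex and that $\nabla g$ is $(L-\mu)$-Lipschitz, i.e., $g\in\mathcal{F}_{0,L-\mu}$.

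Once this is established, I would invoke the textbook co-coercivity inequality
\[
g(x)-g(y)+\nabla g(x)^T(y-x)+\tfrac{1}{2(L-\mu)}\|\nabla g(x)-\nabla g(y)\|^2\le 0,
\]
which holds for every convex $(L-\mu)$-smooth function $g$ (equivalent to $g^*$ being $\tfrac{1}{L-\mu}$-strongly convex; one can also derive it in two lines from the descent lemma applied at the minimizer of $g(\cdot)-\nabla g(y)^T\,\cdot$). Substituting back $\nabla g(\cdot)=\nabla f(\cdot)-\mu(\cdot)$ and $g(\cdot)=f(\cdot)-\tfrac{\mu}{2}\|\cdot\|^2$, the cross term $-\tfrac{\mu}{2}(\|x\|^2-\|y\|^2)-\mu x^T(y-x)$ collapses to $\tfrac{\mu}{2}\|x-y\|^2$, yielding
\[
f(x)-f(y)+\nabla f(x)^T(y-x)+\tfrac{\mu}{2}\|x-y\|^2+\tfrac{1}{2(L-\mu)}\|\nabla f(x)-\nabla f(y)-\mu(x-y)\|^2\le 0.
\]

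The final step is to verify that the two quadratic tail terms above coincide with $\tfrac{1}{2L}\|\nabla f(x)-\nabla f(y)\|^2+\tfrac{\mu}{2(1-\mu/L)}\|x-y-\tfrac{1}{L}(\nabla f(x)-\nabla f(y))\|^2$ as stated. Setting $d:=x-y$ and $g_d:=\nabla f(x)-\nabla f(y)$, both expressions expand into linear combinations of $\|d\|^2$, $\|g_d\|^2$ and $d^T g_d$; matching the three resulting coefficients reduces essentially to the identity $\tfrac{1}{2L}+\tfrac{\mu}{2L(L-\mu)}=\tfrac{1}{2(L-\mu)}$ together with two equally short checks. The main (and only) obstacle is this algebraic rewriting: nothing is deep, but one must carefully reconcile two superficially different quadratic presentations of the same bilinear form. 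Everything else is a routine application of co-coercivity.
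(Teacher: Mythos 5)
Your proof is correct, and it is essentially the same argument used in the cited reference \citep[Theorem 4]{Tayl17}; the paper here does not reprove the lemma, it simply cites it. The reduction $g:=f-\tfrac{\mu}{2}\|\cdot\|^2\in\mathcal{F}_{0,L-\mu}$, the cocoercivity bound for smooth convex $g$, and the back-substitution are precisely the standard route, and I verified the final algebraic reconciliation: writing $d:=x-y$, $g_d:=\nabla f(x)-\nabla f(y)$, both
\[
\tfrac{\mu}{2}\|d\|^2+\tfrac{1}{2(L-\mu)}\|g_d-\mu d\|^2
\quad\text{and}\quad
\tfrac{1}{2L}\|g_d\|^2+\tfrac{\mu L}{2(L-\mu)}\bigl\|d-\tfrac{1}{L}g_d\bigr\|^2
\]
expand to $\tfrac{\mu L}{2(L-\mu)}\|d\|^2+\tfrac{1}{2(L-\mu)}\|g_d\|^2-\tfrac{\mu}{L-\mu}\,d^Tg_d$, so they coincide (noting $\tfrac{\mu}{2(1-\mu/L)}=\tfrac{\mu L}{2(L-\mu)}$). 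One small caveat worth stating explicitly in a write-up: the derivation assumes $\mu<L$ so that $L-\mu>0$, which matches the paper's setting $0\le\mu<L$; the $\mu=0$ case degenerates to plain cocoercivity.
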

The key argument in \citep{Dror14,Tayl17} is that the constraint on the regularity of the function $f$ in problem~\eqref{eq:inf-interp} can be replaced by a finite number of inequalities from Lemma~\ref{lem:interpol}. We get an upper bound on the optimum of problem~\eqref{eq:inf-interp} by relaxing the constraint $f \in \mathcal{F}_{\mu,L}$, keeping just two inequalities from Lemma~\ref{lem:interpol} relating $x_k$ and $x_*$ to obtain the following relaxed problem

\begin{equation}\label{eq:pepfinitedim}
\begin{array}{ll}
     \mbox{maximize}& \dfrac{\|x_{k+1}-x_*\|^2}{\|x_k-x_*\|^2}  \\
     \mbox{subject to}
     & f_k - f_* + g_k^T(x_*-x_k) + \tfrac{1}{2L}\|g_k\|^2+\tfrac{\mu}{2(1-\frac{\mu}{L})}\|x_k-x_*-\tfrac{1}{L}g_k\|^2 \leq 0\\
     & f_* - f_k + \tfrac{1}{2L}\|g_k\|^2+\tfrac{\mu}{2(1-\frac{\mu}{L})}\|x_k-x_*-\tfrac{1}{L}g_k\|^2 \leq 0\\
     & x_{k+1} = x_k - 2\frac{f_k-f_*}{\|g_k\|^2}g_k
\end{array} 
\end{equation}
in the variables $x_k,x_*,g_k \in \reals^n$ and $f_k,f_*\in\reals$.
This relaxed problem is finite dimensional, but still depends on the dimension of the ambient space while we are interested in convergence rates independent of the dimension. One of the key insights of the PEP approach is to notice that \eqref{eq:pepfinitedim} can be kernelized, i.e., written in terms of the quadratic variables $X_k=\|x_k-x_*\|^2,\; G_k=\|g_k\|^2,\; GX_k=g_k^T(x_*-x_k)$ in addition to $f_k$ and $f_*$. Indeed, problem~\eqref{eq:pepfinitedim} is equivalent to solving
\begin{equation}\label{eq:pepkernel}
\begin{array}{ll}
     \mbox{maximize}& 1+4\frac{f_k-f_*}{G_k}\frac{GX_k}{X_k} + 4\frac{(f_k-f_*)^2}{G_kX_k}  \\
     \mbox{subject to}& f_k - f_* + GX_k + \tfrac{1}{2L}G_k+\tfrac{\mu}{2(1-\frac{\mu}{L})}\left( X_k+\tfrac{2}{L}GX_k +\tfrac{1}{L^2}G_k \right) \leq 0\\
     & f_* - f_k  + \tfrac{1}{2L}G_k+\tfrac{\mu}{2(1-\frac{\mu}{L})}\left( X_k +\tfrac{2}{L}GX_k +\tfrac{1}{L^2}G_k \right) \leq 0\\
     & \begin{pmatrix}
     X_k & GX_k\\
     GX_k & G_k
     \end{pmatrix} \succcurlyeq 0
\end{array} 
\end{equation}
in the variables $X_k,G_k,GX_k,f_k,f_* \in \reals$.
This new problem has only five real variables but is not readily tractable because of the non-linearity in the objective. By homogeneity we can impose $X_k = 1$ without loss of generality. We introduce a step size variable $\gamma$ to rewrite the problem as
\begin{equation}\label{eq:pepfinal}
\begin{array}{ll}
     \mbox{maximize}&\rho(\gamma) \\
     \mbox{subject to}&\gamma \in \reals
\end{array} 
\end{equation}
where
\begin{equation}\label{eq:rho-gamma}
\begin{array}{rll}
     \rho(\gamma) :=&\mbox{max.} & 1+2\gamma GX_k + 2(f_k-f_*)\gamma  \\
     &\mbox{s.t.} & f_k - f_* + GX_k + \tfrac{1}{2L}G_k+\tfrac{\mu}{2(1-\frac{\mu}{L})}\left( X_k +\tfrac{2}{L}GX_k +\tfrac{1}{L^2}G_k \right) \leq 0\\
     && f_* - f_k + \tfrac{1}{2L}G_k+\tfrac{\mu}{2(1-\frac{\mu}{L})}\left( X_k +\tfrac{2}{L}GX_k +\tfrac{1}{L^2}G_k \right) \leq 0\\
     && \begin{pmatrix}
     X_k & GX_k\\
     GX_k & G_k
     \end{pmatrix} \succcurlyeq 0\\
     && X_k = 1, \; G_k\gamma = 2(f_k-f_*)   
\end{array} 
\end{equation}
which is a semidefinite program. Given $\gamma$, $\rho(\gamma)$ can thus be computed efficiently and our relaxation upper bound on the convergence rate of the method is then given by the maximum value of $\rho(\gamma)$. Note that due to the definition of the step size, we only need to study $\rho(\gamma)$ on the interval $[\tfrac{1}{L},\tfrac{1}{\mu}]$. Figure~\ref{fig:polyak2_rates} (left) plots $\rho(\gamma)$ for fixed values $\mu=0.1$ and $L=1$, and shows (right) the maximum value of $\rho(\gamma)$ for various condition numbers. In this experiment, the worst case convergence rates we obtained numerically appear to perfectly match the bound ${(L-\mu)^2}/{(L+\mu)^2}$.

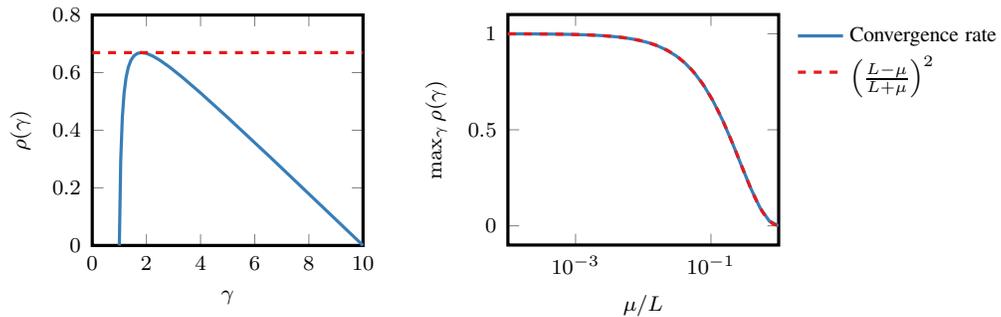
\begin{figure}[ht!]
	\centering
	\begin{tabular}{cc}
		\begin{tikzpicture}
		\begin{axis}[ylabel={$\rho(\gamma)$},xlabel={$\gamma$}, plotOptions,  ymin=0, ymax=0.8,xmin=0,xmax=10,width=.34\linewidth,height=.305\linewidth]
		\addplot [color=colorP1] table [x=gamma,y=rho] {figures/rate_rho_vs_gamma.txt};
		\addplot [color=colorP2, dashed, domain=0:10, samples=2] {(.9)^2/(1.1)^2};
		\end{axis}
		\end{tikzpicture}\\[-4.15cm]
		&
		\begin{tikzpicture}
		\begin{semilogxaxis}[legend pos=outer north east, legend style={draw=none},legend cell align={left}, ylabel={$\max_{\gamma}\rho(\gamma)$},xlabel={$\mu/L$}, plotOptions,  ymin=-.1, ymax=1.1,xmin=1e-4,xmax=1,width=.34\linewidth, height=.305\linewidth]
		\addplot [color=colorP1] table [x=gamma,y=rho] {figures/mu_rate_polyak2.txt}; \addlegendentry{Convergence rate}
		\addplot [color=colorP2, dashed, domain=0.0001:1, samples=50] {(1-x)^2/(1+x)^2}; \addlegendentry{$\left(\tfrac{L-\mu}{L+\mu}\right)^2$}
		\end{semilogxaxis}
		\end{tikzpicture}
		\end{tabular}
		\vspace{-.5cm}\caption{Left: we plot $\rho(\gamma)$, by solving~\eqref{eq:rho-gamma} with $\mu = 0.1$ and $L = 1$. Right: Worst case rate $\max_\gamma\, \rho(\gamma)$, by solving~\eqref{eq:pepfinal}, versus inverse condition number.}
		\label{fig:polyak2_rates}
		\vspace{-.7cm}
\end{figure}

These numerical observations can in fact be proven analytically as follows. Given a target convergence rate $\rho \in [0,1]$, we need to show that
\BEQ\label{proof-ineq}
\|x_{k+1}-x_*\|^2 - \rho \|x_k-x_*\|^2 \leq 0
\EEQ
for all {\em feasible} values of $x_k,x_{k+1},x_*\in\reals^n$, satisfying the constraints of problem~\eqref{eq:pepfinitedim}. In the spirit of the Putinar {\em positivstellensatz} used in sum of squares solutions of semi-algebraic optimization problems \citep{Puti93m,Lass01m,Pari00m}, we seek to write a certificate of the validity of inequality~\eqref{proof-ineq} using a positively weighted sum of valid inequalities satisfied by $x_k,x_{k+1},x_*\in\reals^n$ in~\eqref{eq:pepfinitedim}. Here, this means writing
\begin{equation*}
\begin{aligned}
    &\|x_{k+1}-x_*\|^2 - \rho(\gamma_k)\|x_k-x_*\|^2 = \\
     & \lambda_1\left[ f(x_k) - f_* + \nabla f(x_k)^T(x_*-x_k) + \tfrac{1}{2L}\|\nabla f(x_k)\|^2+ \tfrac{\mu}{2(1-\tfrac{\mu}{L})}\|x_k-x_*-\tfrac{1}{L}\nabla f(x_k)\|^2\right]\\
    +& \lambda_2\left[ f_*-f(x_k) + \tfrac{1}{2L}\|\nabla f(x_k)\|^2+ \tfrac{\mu}{2(1-\tfrac{\mu}{L})}\|x_k-x_*-\tfrac{1}{L}\nabla f(x_k)\|^2\right]\\
    +& \lambda_3\left[2(f(x_k)-f_*) - \gamma_k\|\nabla f(x_k)\|^2 \right] \\
    &\leq  0
    \end{aligned}
\end{equation*} for some $\lambda_1,\lambda_2 \geq 0$, $\lambda_3 \in \reals$, and using the fact $x_{k+1}=x_k-\gamma_k \nabla f(x_k)$ by construction. Through symbolic computations, or by trial and error, inferring a target convergence rate from optimal values of the semidefinite program, the proof consists in showing that we can pick 
\[
\rho(\gamma_k) = \tfrac{(\gamma_k L - 1)(1-\gamma_k\mu)}{\gamma_k(L+\mu)-1},\quad
\lambda_1 = \tfrac{2\gamma_k(\gamma_k L-1)}{\gamma_k(L+\mu) -1},
\quad
\lambda_2 = \tfrac{2\gamma_k(1-\gamma_k\mu)}{\gamma_k(L+\mu) -1} \quad\text{and}\quad 
\lambda_3 =  \tfrac{\gamma_k(2-\gamma_k(L+\mu))}{\gamma_k(L+\mu)-1}.
\]
In practice, the numerical solution of the semidefinite program in~\eqref{eq:rho-gamma} giving $\rho(\gamma)$ can be used to greedily narrow down the list of valid inequalities required by the proof. 

Note that since \eqref{eq:pepfinal} is a semialgebraic problem, we could have used sum-of-squares techniques to prove the convergence rate. However, the multipliers and the rates are fractions in $\gamma_k$. Since one usually doesn't know in advance the form of the denominators, one needs relatively high degree polynomials in the SOS program. This means this approach suffers from the usual SOS issues of poor conditioning and scaling. 
\section{Numerical experiments}
Numerical experiments with our algorithms are provided in Figure~\ref{fig:exps}, respectively on least squares, regularized logistic regression and Lasso problems. For solving the Lasso problems, we used a proximal variant of Algorithm~\ref{algo:agdp}, whose details are provided in Appendix~\ref{proof:prox}. We respectively used the Sonar~\citep{gorman1988analysis} and Musk~\citep{dietterich1997solving} datasets.

In the experiments, when no analytical version of $f_*$ was available (for logistic regression and Lasso), we used ad hoc methods to obtain higher precision estimates of $f_*$. As previously discussed, a fundamental next step is to incorporate successive refinements of a lower bound on $f_*$ (a first step in this direction is for example \citep{Haza19b}). One should notice that vanilla Polyak steps without momentum actually perform very well when they apply (see Appendix~\ref{app:polyak1} for a discussion on the performances of vanilla Polyak steps). We believe that modifying the accelerated Polyak so that it also adapts to the Lipschitz constant could make it more competitive, but the current state of the proofs does not allow it yet.
\def \pltscale {0.275}
\begin{figure}
	\centering
	    \hspace{-0.3cm}\begin{tabular}{lll}
		\begin{tikzpicture}
		\begin{semilogyaxis}[legend pos=outer north east, legend style={draw=none},legend cell align={left}, ylabel={$f-f_*$},xlabel={iterations},xtick={0,1000,2000}, plotOptions,  ymin=1e-11, ymax=1e5,xmin=-10,xmax=2000,width=\pltscale\linewidth, height=\pltscale\linewidth]
		\addplot [color=colorP1] table [x=iter,y=gd] {figures/LSSonar.txt};
		\addplot [color=colorP2] table [x=iter,y=apg] {figures/LSSonar.txt};
		\addplot [color=colorP3] table [x=iter,y=apgmu] {figures/LSSonar.txt};
		\addplot [color=colorP4] table [x=iter,y=pol] {figures/LSSonar.txt};
		\addplot [color=colorP5] table [x=iter,y=adapt] {figures/LSSonar.txt};
		\end{semilogyaxis}
		\end{tikzpicture}
		&
		\hspace{-0.4cm} 
		\begin{tikzpicture}
		\begin{semilogyaxis}[xtick={0,15000,30000}, legend pos=outer north east, legend style={draw=none},legend cell align={left},xlabel={iterations}, plotOptions,  ymin=1e-11, ymax=1e4,xmin=-100,xmax=30000,width=\pltscale\linewidth, height=\pltscale\linewidth]
		\addplot [color=colorP1] table [x=iter,y=gd] {figures/Logit1e-3Sonar.txt};
		\addplot [color=colorP2] table [x=iter,y=apg] {figures/Logit1e-3Sonar.txt};
		\addplot [color=colorP3] table [x=iter,y=apgmu] {figures/Logit1e-3Sonar.txt};
		\addplot [color=colorP4] table [x=iter,y=pol] {figures/Logit1e-3Sonar.txt};
		\addplot [color=colorP5] table [x=iter,y=adapt] {figures/Logit1e-3Sonar.txt};
		\end{semilogyaxis}
		\end{tikzpicture}
		&
		\hspace{-0.4cm} 
		\begin{tikzpicture}
		\begin{semilogyaxis}[legend pos=outer north east, legend style={draw=none},legend cell align={left},xlabel={iterations}, plotOptions,  ymin=1e-11, ymax=1e3,xmin=-10,xmax=2000,xtick={0,1000,2000},width=\pltscale\linewidth, height=\pltscale\linewidth]
		\addplot [color=colorP1] table [x=iter,y=gd] {figures/Lasso1Sonar.txt};
		\addlegendentry{GD}
		\addplot [color=colorP2] table [x=iter,y=apg] {figures/Lasso1Sonar.txt};
		\addlegendentry{AGM-smooth}
		\addplot [color=colorP3] table [x=iter,y=a] {figures/zero.txt};
		\addlegendentry{AGM}
		\addplot [color=colorP5] table [x=iter,y=adapt] {figures/Lasso1Sonar.txt};
		\addlegendentry{Acc Polyak II}
		\addplot [color=colorP4] table [x=iter,y=a] {figures/zero.txt};
		\addlegendentry{Polyak}
		\end{semilogyaxis}
		\end{tikzpicture}
		\\
		\begin{tikzpicture}
		\begin{semilogyaxis}[xtick={0,10000,20000},legend pos=outer north east, legend style={draw=none},legend cell align={left}, ylabel={$f-f_*$},xlabel={iterations}, plotOptions,  ymin=1e-11, ymax=1e5,xmin=-10,xmax=20000,width=\pltscale\linewidth, height=\pltscale\linewidth]
		\addplot [color=colorP1] table [x=iter,y=gd] {figures/LSMusk.txt};
		\addplot [color=colorP2] table [x=iter,y=apg] {figures/LSMusk.txt};
		\addplot [color=colorP3] table [x=iter,y=apgmu] {figures/LSMusk.txt};
		\addplot [color=colorP4] table [x=iter,y=pol] {figures/LSMusk.txt};
		\addplot [color=colorP5] table [x=iter,y=adapt] {figures/LSMusk.txt};
		\end{semilogyaxis}
		\end{tikzpicture}
		&
		\hspace{-0.4cm} 
		\begin{tikzpicture}
		\begin{semilogyaxis}[xtick={0,50000,100000}, legend pos=outer north east, legend style={draw=none},legend cell align={left},xlabel={iterations}, plotOptions,  ymin=1e-11, ymax=1e4,xmin=-1000,xmax=100000,width=\pltscale\linewidth, height=\pltscale\linewidth]
		\addplot [color=colorP1] table [x=iter,y=gd] {figures/Logit1e-3Musk.txt};
		\addplot [color=colorP2] table [x=iter,y=apg] {figures/Logit1e-3Musk.txt};
		\addplot [color=colorP3] table [x=iter,y=apgmu] {figures/Logit1e-3Musk.txt};
		\addplot [color=colorP4] table [x=iter,y=pol] {figures/Logit1e-3Musk.txt};
		\addplot [color=colorP5] table [x=iter,y=adapt] {figures/Logit1e-3Musk.txt};
		\end{semilogyaxis}
		\end{tikzpicture}
		&
		\hspace{-0.4cm} 
		\begin{tikzpicture}
		\begin{semilogyaxis}[
		xtick={0,5000,10000}, xlabel={iterations}, plotOptions,  ymin=1e-11, ymax=1e3,xmin=-100,xmax=10000,width=\pltscale\linewidth, height=\pltscale\linewidth]
		\addplot [color=colorP1] table [x=iter,y=gd] {figures/Lasso1Musk.txt};
		\addplot [color=colorP2] table [x=iter,y=apg] {figures/Lasso1Musk.txt};
		\addplot [color=colorP3] table [x=iter,y=a] {figures/zero.txt};
		\addplot [color=colorP5] table [x=iter,y=adapt] {figures/Lasso1Musk.txt};
		\addplot [color=colorP4] table [x=iter,y=a] {figures/zero.txt};
		\end{semilogyaxis}
		\end{tikzpicture}
		\end{tabular}
		\vspace{-.5cm}
		\caption{Top: Sonar dataset. Bottom: Musk dataset. Left: Least squares. Middle: Logistic regression with Tikhonov regularization (regularization parameter $10^{-3}$). Right: LASSO (regularization parameter $1$). For Polyak steps the best iterate is displayed. No tuning in any of the methods.}
		\label{fig:exps}
		\vspace{-1cm}
\end{figure}
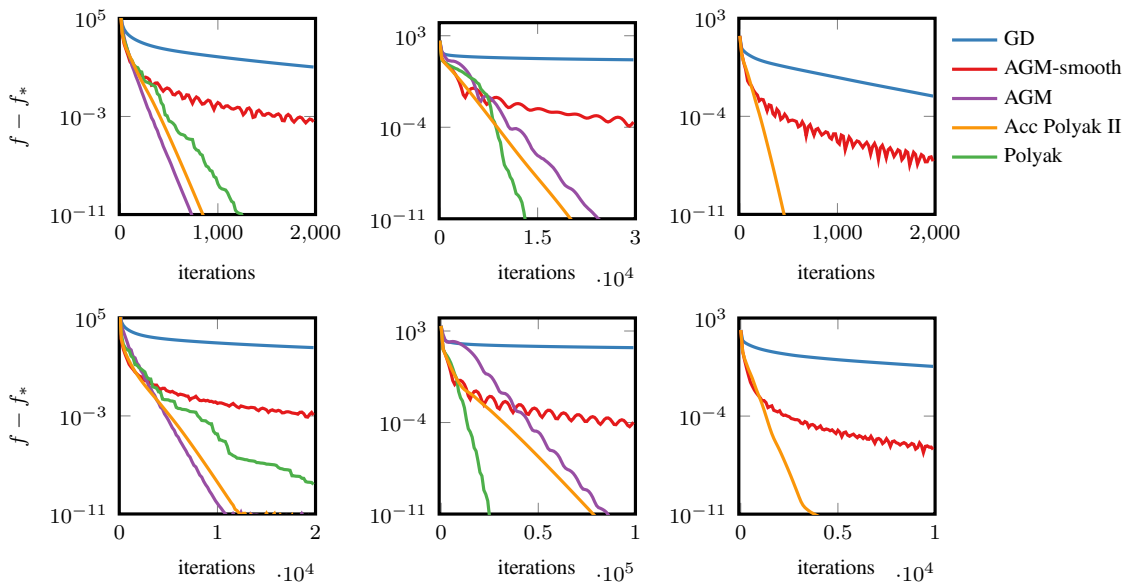 
		
\section{Conclusion and perspectives}
We provided a momentum version of the Polyak steps, with an accelerated linear convergence rate. When $f_*$ is available, this method is easy to implement and requires no tuning at all. On the way, we illustrated the methodology that was used for obtaining those rates, for the special case of a gradient method with Polyak steps. This methodology relies on the recent developments on performance estimation problems~\citep{Dror14,Tayl17}, which we adapted for studying our adaptive methods.

One of the main questions that remains open is to understand whether there exists a way to get the same convergence guarantees without using $f_*$. The robustness result of Lemma~\ref{lem:notbroken} is reassuring in the sense that a misspecified $f_*$ cannot break the algorithm (albeit worsening the convergence rate). We are confident that ideas introduced by \cite{Haza19b} for Polyak steps could be used for our algorithm as well, and could potentially allow dealing with unknown $f_*$ at a reasonable cost. However it still appears as an unnatural trick that adds complexity to the method.

Let us mention that the problem of designing theoretically supported adaptive methods is an open question. We managed to design~\eqref{eq:Polyakstep3}, for which we used our methodology---to find a method that would use Polyak steps to make the primal gap decrease linearly at each iterations---, but designing adaptive accelerated methods appeared as much more daunting task.

Finally, we note that regular Polyak steps do not enjoy a known (working) proximal extension. On the contrary, our results suggest that its accelerated counterparts do work with proximal operators (for minimizing composite objective functions with a non-smooth term). Therefore, developing the theory in this direction is another natural next step.
\clearpage
\paragraph{Codes} The code used to obtain Figures~\ref{fig:polyak2_rates}-\ref{fig:polyak1}-\ref{fig:exps} and to verify proofs is available at \\
{\small \url{https://github.com/mathbarre/PerformanceEstimationPolyakSteps}}. 

\acks{The authors thank Konstantin Mishchenko and Yura Malitsky for insightful discussions on Polyak steps, and comments on a preliminary version of this work. The authors also thank three anonymous reviewers for their constructive feedbacks on the manuscript.

MB acknowledges support from an AMX fellowship. AT acknowledges support from the European Research Council (grant SEQUOIA 724063). AA is at CNRS \& d\'epartement d'informatique, \'Ecole normale sup\'erieure, UMR CNRS 8548, 45 rue d'Ulm 75005 Paris, France,  INRIA  and  PSL  Research  University. AA acknowledges support from the French government under management of Agence Nationale de la Recherche as part of the "Investissements d'avenir" program, reference ANR-19-P3IA-0001 (PRAIRIE 3IA Institute), the ML \& Optimisation joint research initiative with the fonds AXA pour la recherche and Kamet Ventures, as well as a Google focused award.}

\bibliography{MainPerso,mybiblio}
\clearpage
\appendix

\section{Proof of Proposition~\ref{prop:polyak1}}\label{proof:polyak1}
\begin{proof}
For proving the desired result, it is only necessary to consider a single iteration of Algorithm~\ref{algo:generic_Polyak} with~\eqref{eq:Polyakstep2}. We use the following (in)equalities obtained from Lemma~\ref{lem:interpol}:
\begin{itemize}
    \item smoothness and strong convexity between $x_k$ and $x_*$, with multiplier $\lambda_1 = \frac{2\gamma_k(\gamma_k L-1)}{\gamma_k(L+\mu) -1}$:
    \[f(x_k) - f_* + \nabla f(x_k)^T(x_*-x_k) + \tfrac{1}{2L}\|\nabla f(x_k)\|^2+ \tfrac{\mu}{2(1-\tfrac{\mu}{L})}\|x_k-x_*-\tfrac{1}{L}\nabla f(x_k)\|^2 \leq 0,\]
    \item smoothness and strong convexity between $x_*$ and $x_k$, with multiplier $\lambda_2 = \frac{2\gamma_k(1-\gamma_k\mu)}{\gamma_k(L+\mu) -1}$:
    \[f_*-f(x_k) + \tfrac{1}{2L}\|\nabla f(x_k)\|^2+ \tfrac{\mu}{2(1-\tfrac{\mu}{L})}\|x_k-x_*-\tfrac{1}{L}\nabla f(x_k)\|^2 \leq 0,\]
    \item definition of the step-size policy, with multiplier $\lambda_3 = \frac{\gamma_k(2-\gamma_k(L+\mu))}{\gamma_k(L+\mu)-1}$:
    \[ 2(f(x_k)-f_*) - \gamma_k\|\nabla f(x_k)\|^2 = 0.\]
\end{itemize}
Given that $\lambda_1,\lambda_2\geq 0$ (since $\tfrac1L\leq \gamma_k\leq \tfrac1\mu$), the following weighted sum is a valid inequality:
\begin{equation*}
    \begin{aligned}
    0 \geq & \lambda_1\left[ f(x_k) - f_* + \nabla f(x_k)^T(x_*-x_k) + \tfrac{1}{2L}\|\nabla f(x_k)\|^2+ \tfrac{\mu}{2(1-\tfrac{\mu}{L})}\|x_k-x_*-\tfrac{1}{L}\nabla f(x_k)\|^2\right]\\
    &+ \lambda_2\left[ f_*-f(x_k) + \tfrac{1}{2L}\|\nabla f(x_k)\|^2+ \tfrac{\mu}{2(1-\tfrac{\mu}{L})}\|x_k-x_*-\tfrac{1}{L}\nabla f(x_k)\|^2\right]\\
    &+ \lambda_3\left[2(f(x_k)-f_*) - \gamma_k\|\nabla f(x_k)\|^2 \right] .
    \end{aligned}
\end{equation*}

Using the fact that $x_{k+1}=x_k-\gamma_k \nabla f(x_k)$, this weighted sum can be reformulated exactly as
\[ \|x_{k+1}-x_*\|^2 - \rho(\gamma_k)\|x_k-x_*\|^2 \leq 0\]
(one can verify that both expressions are equal) with $\rho(\gamma) = \frac{(\gamma L - 1)(1-\gamma\mu)}{\gamma(L+\mu)-1}$. Therefore, after $N$ iterations, we get $${\|x_N-x_*\|^2 \leq \left(\prod_{i=0}^{N-1}\rho(\gamma_i) \right)\|x_0-x_*\|^2}.$$
In addition, distance to optimality decreases, in the worst-case, with rate $\max_{\gamma}\rho(\gamma)$, with
\[ 
\tfrac{(L-\mu)^2}{(L+\mu)^2}=\max \left\{\rho(\gamma)\,\big|\,\tfrac1L\leq\gamma\leq\tfrac1\mu\right\}.
\]
because $\rho(\gamma)$ is a concave function of $\gamma$ on the interval $[\frac{1}{L},\frac{1}{\mu}]$, as $\rho''(\gamma) = -\frac{2L\mu}{(\gamma(L+\mu) -1)^3}\leq 0$, whose maximum is attained at $\gamma_*=\frac{2}{L+\mu}$. Note that substituting the expression of $\gamma_k$ inside the interpolation inequalities, instead of using it as an independent equality constraints, yields a considerably less tractable result.
\end{proof}

\section{Proof of Proposition~\ref{prop:polyak2}}\label{proof:polyak2}
\begin{proof}Let us consider a single iteration of Algorithm~\ref{algo:generic_Polyak}, with step sizes~\eqref{eq:Polyakstep3}.
The proof is a consequence of the following combination of inequalities obtained from Lemma~\ref{lem:interpol}:
\begin{itemize}
    \item smoothness and strong convexity between $x_k$ and $x_*$, with multiplier $\lambda_1 = \gamma_k \mu(L\gamma_k-1)$:
    \[f(x_k) - f_* + \nabla f(x_k)^T(x_*-x_k) + \tfrac{1}{2L}\|\nabla f(x_k)\|^2+ \tfrac{\mu}{2(1-\tfrac{\mu}{L})}\|x_k-x_*-\tfrac{1}{L}\nabla f(x_k)\|^2 \leq 0,\]
    \item smoothness and strong convexity between $x_{k+1}$ and $x_*$, with multiplier $\lambda_2 = \gamma_k\mu$:
    \begin{equation*}
        \begin{aligned}
         f(x_{k+1}) - f_* + \nabla f(x_{k+1})^T(x_*-x_{k+1}) &+ \tfrac{1}{2L}\|\nabla f(x_{k+1})\|^2\\
         &+ \tfrac{\mu}{2(1-\tfrac{\mu}{L})}\|x_{k+1}-x_*-\tfrac{1}{L}\nabla f(x_{k+1})\|^2 \leq 0,
        \end{aligned}
    \end{equation*}
    \item smoothness and strong convexity between $x_{k+1}$ and $x_k$, with multiplier $\lambda_3 = 1-\gamma_k\mu$:
    \begin{equation*}
        \begin{aligned}
        f(x_{k+1}) - f(x_k) &+ \nabla f(x_{k+1})^T(x_k-x_{k+1}) + \tfrac{1}{2L}\|\nabla f(x_{k+1})-\nabla f(x_k)\|^2\\ &+\tfrac{\mu}{2(1-\tfrac{\mu}{L})}\|x_{k+1}-x_k-\tfrac{1}{L}(\nabla f(x_{k+1}) -\nabla f(x_k))\|^2 \leq 0,
        \end{aligned}
    \end{equation*}
    \item definition of the step-size policy, with multiplier $\lambda_4 = \frac{\gamma_k}{2}((L+\mu)\gamma_k -2)$:
    \[ (2L^2\gamma_k-4L)(f(x_k)-f_*) + \|\nabla f(x_k)\|^2 = 0. \]
\end{itemize}
Given that $\lambda_1,\lambda_2,\lambda_3\geq 0$ (due to $\tfrac{1}{L}\leq\gamma_k\leq \tfrac{2-\tfrac{\mu}{L}}{L}$), the following weighted sum is a valid inequality:
\begin{equation*}
\begin{aligned}
0\geq  \lambda_1 &\left[ f(x_k) - f_* + \nabla f(x_k)^T(x_*-x_k) + \tfrac{1}{2L}\|\nabla f(x_k)\|^2+ \tfrac{\mu}{2(1-\tfrac{\mu}{L})}\|x_k-x_*-\tfrac{1}{L}\nabla f(x_k)\|^2\right]\\
 +\lambda_2 &\bigg[ f(x_{k+1}) - f_* + \nabla f(x_{k+1})^T(x_*-x_{k+1}) + \tfrac{1}{2L}\|\nabla f(x_{k+1})\|^2\\
 &+ \tfrac{\mu}{2(1-\tfrac{\mu}{L})}\|x_{k+1}-x_*-\tfrac{1}{L}\nabla f(x_{k+1})\|^2\bigg]\\
+\lambda_3&\bigg[f(x_{k+1}) - f(x_k) + \nabla f(x_{k+1})^T(x_k-x_{k+1}) + \tfrac{1}{2L}\|\nabla f(x_{k+1})-\nabla f(x_k)\|^2  \\
&+\tfrac{\mu}{2(1-\tfrac{\mu}{L})}\|x_{k+1}-x_k-\tfrac{1}{L}(\nabla f(x_{k+1}) -\nabla f(x_k))\|^2 \bigg]\\
  +\lambda_4 &\left[(2L^2\gamma_k-4L)(f(x_k)-f_*) + \|\nabla f(x_k)\|^2\right].\\
\end{aligned}
\end{equation*}
Using the expression $x_{k+1}=x_k-\gamma_k \nabla f(x_k)$ (without substituting the expression of $\gamma_k$, whose value is encoded through the last equality of the list), this weighted sum can be rewritten exactly as
\begin{equation*}
\begin{aligned}
0\geq & f(x_{k+1})-f_*- \rho(\gamma_k)(f(x_k)-f_*)\\&+\tfrac{1}{2(L-\mu)}\left\|\nabla f(x_{k+1})-L\mu\gamma_k(x_k-x_*) + (\gamma_k(L+\mu) - 1)\nabla f(x_k) \right\|^2
\end{aligned}
\end{equation*}
with $\rho(\gamma) = (L\gamma - 1)\left(L\gamma(3-\gamma(L+\mu))-1\right)$ which, in turns, give
\begin{equation*}
\begin{aligned}
f(x_{k+1})-f_*\leq&  \rho(\gamma_k)(f(x_k)-f_*)\\&-\tfrac{1}{2(L-\mu)}\left\|\nabla f(x_{k+1})-L\mu\gamma_k(x_k-x_*) + (\gamma_k(L+\mu) - 1)\nabla f(x_k) \right\|^2\\
\leq&  \rho(\gamma_k)(f(x_k)-f_*).
\end{aligned}
\end{equation*}
Therefore, after $N$ iterations, we get $${f(x_N)-f_* \leq \left(\prod_{i=0}^{N-1}\rho(\gamma_i) \right)(f(x_0)-f_*).}$$
Finally, the worst-case convergence rate is $\max_{\gamma}\rho(\gamma)$ on the interval $[\tfrac{1}{L},\tfrac{2-{\mu}/{L}}{L}]$, for which
\[ {\tfrac{(L-\mu)^2}{(L+\mu)^2}=\max\left\{\rho(\gamma)\,\big|\, \tfrac1L\leq\gamma\leq\tfrac{2-{\mu}/{L}}{L} \right\}.}\]
The proof follows from the following steps:
\begin{itemize}
    \item First, on the boundaries of the interval: (i) $\rho(\frac{1}{L}) = 0$ and (ii) $\rho(\frac{2-\frac{\mu}{L}}{L}) = \frac{\left(L-\mu\right)^4}{L^4} \leq \frac{\left(L - \mu\right)^2}{\left(L + \mu\right)^2}$.
    \item Secondly, in the interior of the interval: $\rho'(\gamma) = L(3L\gamma-2)(2-(L+\mu)\gamma)$ is zero at $\gamma_* = \frac{2}{L+\mu}$ (inside the interval).
    \item Therefore ${\rho(\gamma_*) = \frac{\left(L-\mu\right)^2}{\left(L+\mu\right)^2}}$ and this is the maximum on the interval.
\end{itemize}
\end{proof}
\section{Proof of \S~\ref{sec:acc}}
\subsection{Proof of Lemma~\ref{lem:notbroken}}\label{proof:notbroken}
\begin{proof}
In this section, we use $\rho=1-\mu/L$. The proof consists in combining the following inequalities obtained from Lemma~\ref{lem:interpol}:
\begin{itemize}
    \item smoothness and strong convexity between $x_k$ and $y_k$ with multiplier $\lambda_1 = \rho$:
    \begin{equation*}
        \begin{aligned}
        f(x_k) - f(y_k) &+ \nabla f(x_k)^T(y_k-x_k) + \tfrac{1}{2L}\|\nabla f(x_k)-\nabla f(y_k)\|^2\\&+ \tfrac{\mu}{2(1-\tfrac{\mu}{L})}\|x_k-y_k-\tfrac{1}{L}(\nabla f(x_k)-\nabla f(y_k) )\|^2 \leq 0,
        \end{aligned}
    \end{equation*} 
    \item smoothness and strong convexity between $y_{k+1}$ and $x_*$ with multiplier $\lambda_2 = 1-\rho$:
    \begin{equation*}
        \begin{aligned}
        f(y_{k+1}) - f_* &+ \nabla f(y_{k+1})^T(x_*-y_{k+1}) + \tfrac{1}{2L}\|\nabla f(y_{k+1})\|^2\\
        &+ \tfrac{\mu}{2(1-\tfrac{\mu}{L})}\|y_{k+1}-x_*-\tfrac{1}{L}\nabla f(y_{k+1})\|^2 \leq 0,
        \end{aligned}
    \end{equation*} 
    
    \item smoothness and strong convexity between $y_{k+1}$ and $x_k$ with multiplier $\lambda_3 = \rho$:
    \begin{equation*}
        \begin{aligned}
        f(y_{k+1}) - f(x_k) &+ \nabla f(y_{k+1})^T(x_k-y_{k+1}) + \tfrac{1}{2L}\|\nabla f(y_{k+1})-\nabla f(x_k)\|^2\\&+\tfrac{\mu}{2(1-\tfrac{\mu}{L})}\|y_{k+1}-x_k-\tfrac{1}{L}(\nabla f(y_{k+1}) -\nabla f(x_k))\|^2 \leq 0.
        \end{aligned}
    \end{equation*}
\end{itemize}
Given that $\lambda_1,\lambda_2,\lambda_3\geq0$, the following weighted sum is a valid inequality
\begin{equation*}
    \begin{aligned}
    0\geq &\lambda_1 \bigg[ f(x_k) - f(y_k) + \nabla f(x_k)^T(y_k-x_k) + \tfrac{1}{2L}\|\nabla f(x_k)-\nabla f(y_k)\|^2 \\
    &+ \tfrac{\mu}{2(1-\frac{\mu}{L})}\|x_k-y_k-\tfrac{1}{L}(\nabla f(x_k)-\nabla f(y_k) )\|^2\bigg]\\
    & + \lambda_2\bigg[f(y_{k+1}) - f_* + \nabla f(y_{k+1})^T(x_*-y_{k+1}) + \tfrac{1}{2L}\|\nabla f(y_{k+1})\|^2 \\
    &+\tfrac{\mu}{2(1-\tfrac{\mu}{L})}\|y_{k+1}-x_*-\tfrac{1}{L}\nabla f(y_{k+1})\|^2\bigg]\\
    &+\lambda_3 \bigg[ f(y_{k+1}) - f(x_k) + \nabla f(y_{k+1})^T(x_k-y_{k+1}) + \tfrac{1}{2L}\|\nabla f(y_{k+1})-\nabla f(x_k)\|^2\\
    &  +\tfrac{\mu}{2(1-\tfrac{\mu}{L})}\|y_{k+1}-x_k-\tfrac{1}{L}(\nabla f(y_{k+1}) -\nabla f(x_k))\|^2\bigg],
    \end{aligned}
\end{equation*}
which can be reformulated exactly, using the notation \BEAS
V(x,y)&=&f(y)-f_*+\tfrac{L-\mu}{2}\|x-y\|^2\\
y_{k+1} &=& x_{k} - \tfrac{1}{L}\nabla f(x_k)\\
x_{k+1} &=& y_{k+1} + \beta_k(y_{k+1}-y_k)\\
\EEAS
along with the expression of $\rho$, in the form
\begin{equation*}
    \begin{aligned}
    0\geq &V(x_{k+1},y_{k+1}) - \rho V(x_k,y_{k}) \\&+\tfrac1{2(L-\mu)}\left\|(1-\rho ) (\nabla f(x_{k})-L(x_{k}-x_*))+ \nabla f(y_{k+1})\right\|^2\\&+\tfrac{\rho}{2 (L-\mu )}\left\| \nabla f(y_{k}) - \nabla f(x_{k}) + \mu(x_{k} - y_{k})\right\|^2\\
    &+ \tfrac{(1-\beta^2)\rho}{2L}\|\nabla f(x_{k}) + L(y_{k}-x_{k}) \|^2.\\
    \end{aligned}
\end{equation*}
Therefore, using the assumption $\beta_k \in [0,1]$, we finally arrive to the desired
\[ V(x_{k+1},y_{k+1}) \leq \rho V(x_{k},y_{k}).\]
\end{proof}
\subsection{Proof of Lemma~\ref{lem:acc_classic}}\label{proof:acc_classic}
\begin{proof}
In this setting, we write $\rho(x) = \frac{1}{1+\frac{x}{L}}$. The proof consists in the following combination of inequalities obtained from Lemma~\ref{lem:interpol}:
\begin{itemize}
    \item smoothness and convexity between $y_{k+1}$ and $x_{k}$ with multiplier $\lambda_1 = \rho(\tilde{\mu}_k)$:
    \[f(y_{k+1}) - f(x_{k}) + \nabla f(y_{k+1})^T(x_{k}-y_{k+1}) + \tfrac{1}{2L}\|\nabla f(x_{k})-\nabla f(y_{k+1})\|^2 \leq 0,\]
    \item convexity between $x_{k}$ and $y_{k}$ with multiplier $\lambda_2 = \rho(\tilde{\mu}_k)$:
    \[f(x_{k}) - f(y_{k}) + \nabla f(x_{k})^T(y_{k}-x_{k}) \leq 0,\]
    \item definition of $\tilde{\mu}_k$ with multiplier $\lambda_3 = \frac{1-\rho(\tilde{\mu}_k)}{2\tilde{\mu}_k}$:
    \[2\tilde{\mu}_k(f(y_{k+1})-f_*) - \|\nabla f(y_{k+1})\|^2 \leq 0\] (we use an inequality so that it also holds for $\tilde{\mu}_{k}=\min\{  \tilde{\mu}_{k-1},\tfrac{\|\nabla f(y_{k+1})\|^2}{2(f(y_{k+1})-f_*)}\}$).
\end{itemize}
The weighted sum is a valid inequality given that $\lambda_1,\lambda_2,\lambda_3\geq 0$:
\begin{equation*}
\begin{aligned}
0\geq  \lambda_1& \left[ f(y_{k+1}) - f(x_k) + \nabla f(y_{k+1})^T(x_k-y_{k+1}) + \tfrac{1}{2L}\|\nabla f(x_k)-\nabla f(y_{k+1})\|^2\right]\\
&+ \lambda_2 \left[f(x_k) - f(y_k) + \nabla f(x_k)^T(y_k-x_k)\right]\\
&+\lambda_3\left[2\tilde{\mu}_k(f(y_{k+1})-f_*) - \|\nabla f(y_{k+1})\|^2 \right],
\end{aligned}
\end{equation*}
which can be reformulated exactly, using the notation \BEAS
V(x,y)&=&f(y)-f_*+\tfrac{L}{2}\|x-y\|^2\\
y_{k+1} &=& x_{k} - \tfrac{1}{L}\nabla f(x_k)\\
x_{k+1} &=& y_{k+1} + \beta_k(y_{k+1}-y_k)\\
\beta_k &=& \frac{\sqrt{L} - \sqrt{\tilde{\mu}_k} }{\sqrt{L} + \sqrt{\tilde{\mu}_k}}
\EEAS
along with the expression for $\rho(x)$, in the form
\begin{equation*}
\begin{aligned}
0\geq & V(x_{k+1},y_{k+1}) - \rho(\tilde{\mu}_k)V(x_k,y_k) \\
&+\tfrac{\left(4 L^2 \sqrt{\tfrac{\tilde{\mu}_k}{L}}-L \left(\tilde{\mu}_k-2 \tilde{\mu}_k \sqrt{\frac{\tilde{\mu}_k}{L}}\right)-\tilde{\mu}_k^2\right)}{2 L^2 (L+\tilde{\mu}_k) \left(\sqrt{\frac{\tilde{\mu}_k}{L}}+1\right)^2}\|\nabla f(x_k)+L (y_k-x_k)\|^2,
\end{aligned}
\end{equation*}
which, in turns, is equivalent to
\begin{equation*}
\begin{aligned}
V(x_{k+1},y_{k+1}) \leq & \rho(\tilde{\mu}_k)V(x_k,y_k) -\tfrac{\left(4 L^2 \sqrt{\frac{\tilde{\mu}_k}{L}}-L \left(\tilde{\mu}_k-2 \tilde{\mu}_k \sqrt{\frac{\tilde{\mu}_k}{L}}\right)-\tilde{\mu}_k^2\right)}{2 L^2 (L+\tilde{\mu}_k) \left(\sqrt{\frac{\tilde{\mu}_k}{L}}+1\right)^2}\|\nabla f(x_k)+L (y_k-x_k)\|^2,\\
\leq & \rho(\tilde{\mu}_k)V(x_k,y_k)
\end{aligned}
\end{equation*}
where the inequality follows from the sign of the term we removed, so it remains to show that \[4 L^2 \sqrt{\tfrac{\tilde{\mu}_k}{L}}-L \left(\tilde{\mu}_k-2 \tilde{\mu}_k \sqrt{\tfrac{\tilde{\mu}_k}{L}}\right)-\tilde{\mu}_k^2 \geq 0 \quad \forall\tilde{\mu}_k \in [0,L].\] Indeed, evaluating the sign of the previous expression boils down to study that of $g(x) = 4 \sqrt{x}-\left(x-2 x \sqrt{x}\right)-x^2$ on $[0,1]$, which follows from:
\[ g(x) \geq 3\sqrt{x}-x\sqrt{x} \geq 0 \quad \forall x\in [0,1].\]
\end{proof}
\subsection{Proof of Lemma~\ref{lem:acd_34}}\label{proof:acd_34}
\begin{proof}Our statement follows from a weighted sum of inequalities obtained from Lemma~\ref{lem:interpol}:
\begin{itemize}
    \item smoothness and strong convexity between $y_{k+1}$ and $x_{k}$, with multiplier $\lambda_1 = 1$:
    \begin{equation*} 
        \begin{aligned}
        f(y_{k+1}) - f(x_{k}) &+ \nabla f(y_{k+1})^T(x_{k}-y_{k+1}) + \tfrac{1}{2L}\|\nabla f(y_{k+1}) - \nabla f(x_{k})\|^2 \\&+ \tfrac{\mu}{2(1-\frac{\mu}{L})}\|y_{k+1}-x_{k}-\tfrac{1}{L}(\nabla f(y_{k+1})-\nabla f(x_{k}))\|^2 \leq 0,
        \end{aligned}
    \end{equation*}
    \item smoothness and strong convexity between $x_{k}$ and $x_*$, with multiplier $\lambda_2 = 1-\rho$:
    \begin{equation*}
        \begin{aligned}
        f(x_{k}) - f_* &+ \nabla f(x_{k})^T(x_*-x_{k})+\tfrac1{2L}\|\nabla f(x_{k})\|^2\\&+\tfrac{\mu}{2(1-\tfrac{\mu}{L})}\|x_{k}-x_*-\tfrac1L \nabla f(x_{k})\|^2\leq 0,
        \end{aligned}
    \end{equation*}
    \item convexity between $x_{k}$ and $y_{k}$, with multiplier $\lambda_3 = \rho$: 
    \[f(x_{k}) - f(y_{k})+ \nabla f(x_{k})^T(y_{k}-x_{k})\leq 0.\]
\end{itemize}
The weighted sum is a valid inequality given that $\lambda_1,\lambda_2,\lambda_3\geq0$:
\begin{equation*}
\begin{aligned}
0\geq  \lambda_1 &\bigg[ f(y_{k+1}) - f(x_{k}) + \nabla f(y_{k+1})^T(x_{k}-y_{k+1}) + \tfrac{1}{2L}\|\nabla f(y_{k+1}) - \nabla f(x_{k})\|^2\\
& +\tfrac{\mu}{2(1-\tfrac{\mu}{L})}\|y_{k+1}-x_{k}-\frac{1}{L}(\nabla f(y_{k+1})-\nabla f(x_{k}))\|^2 \bigg]\\
+ \lambda_2 &\bigg[f(x_{k}) - f_*+ \nabla f(x_{k})^T(x_*-x_{k})+\tfrac1{2L}\|\nabla f(x_{k})\|^2 \\ & +\tfrac{\mu}{2(1-\tfrac{\mu}{L})}\|x_{k}-x_*-\tfrac1L \nabla f(x_{k})\|^2\bigg]\\
+\lambda_3&\left[f(x_{k}) - f(y_{k})+ \nabla f(x_{k})^T(y_{k}-x_{k}) \right].
\end{aligned}
\end{equation*}
This inequality can be reformulated using the notations  
\BEAS
V(x,y)&=&f(y)-f_*+\tfrac{L}{2}\|\tfrac{1}{\sqrt{\rho}}(x-x_*)-\sqrt{\rho}(y-x_*) \|^2\\
y_{k+1} &=& x_{k} - \tfrac{1}{L}\nabla f(x_k)\\
x_{k+1} &=& y_{k+1} + \beta_k(y_{k+1}-y_k)\\
\beta &=& \beta_k 
\EEAS
in the form
\begin{equation*}
\begin{aligned}
0\geq & V(x_{k+1},y_{k+1}) -\rho V(x_{k},y_{k}) +\tfrac{1}{2(L-\mu)}\|\nabla f(y_{k+1})\|^2+\tfrac{1-\rho}{2L}\|\nabla f(x_{k})\|^2 \\
 & +\tfrac{L \left(\rho ^3-\beta ^2\right)}{2 \rho }\| (y_{k}-x_*) + \tfrac{\beta  \rho -\beta  (\beta +1)+\rho ^2}{\beta ^2-\rho ^3} (x_{k}-x_*) + \tfrac{\beta ^2-\beta  \rho +\beta -\rho ^2}{\beta ^2 L-L \rho ^3} \nabla f(x_{k})\|^2\\
&+\tfrac{L^2 (1-\rho) \left(\tfrac{\mu}{L}  \rho  (2 \beta  \rho -\beta  (\beta +2)+\rho )+ (\rho -1) (\beta -\rho )^2\right)}{2 \left(\rho ^3-\beta ^2\right) (L-\mu )} \|x_{k}-x_*-\tfrac1L\nabla f(x_{k})\|^2.
\end{aligned}
\end{equation*}
It is then direct to reach
\begin{equation*}
\begin{aligned}
 V(x_{k+1},y_{k+1}) \leq&  \rho V(x_{k},y_{k}) -\tfrac{1}{2(L-\mu)}\|\nabla f(y_{k+1})\|^2-\tfrac{1-\rho}{2L}\|\nabla f(x_{k})\|^2 \\
 & -\tfrac{L \left(\rho ^3-\beta ^2\right)}{2 \rho }\| (y_{k}-x_*) + \tfrac{\beta  \rho -\beta  (\beta +1)+\rho ^2}{\beta ^2-\rho ^3} (x_{k}-x_*) + \tfrac{\beta ^2-\beta  \rho +\beta -\rho ^2}{\beta ^2 L-L \rho ^3} \nabla f(x_{k})\|^2\\
&-\tfrac{L^2 (1-\rho) \left(\tfrac{\mu}{L}  \rho  (2 \beta  \rho -\beta  (\beta +2)+\rho )+ (\rho -1) (\beta -\rho )^2\right)}{2 \left(\rho ^3-\beta ^2\right) (L-\mu )} \|x_{k}-x_*-\tfrac1L\nabla f(x_{k})\|^2\\
  \leq &\rho V(x_{k},y_k),
\end{aligned}
\end{equation*}
where we used the facts that the following coefficients were nonnegative (proofs below) on the domain of interest:
\begin{itemize}
    \item $\tfrac{1}{2(L-\mu)}\geq 0$ (clear from the assumption $\mu\leq L$),
    \item $ \tfrac{1-\rho}{L}\geq 0$ (clear from $\rho\leq 1$),
    \item $\tfrac{L \left(\rho ^3-\beta ^2\right)}{2 \rho }\geq 0$ follows from $\left(\rho ^3-\beta ^2\right)\geq 0$, proved below,
    \item $\tfrac{L^2 (1-\rho) \left(\tfrac{\mu}{L}  \rho  (2 \beta  \rho -\beta  (\beta +2)+\rho )+ (\rho -1) (\beta -\rho )^2\right)}{2 \left(\rho ^3-\beta ^2\right) (L-\mu )}\geq0$ follows from previous points along with \[\tfrac{\mu}{L}  \rho  (2 \beta  \rho -\beta  (\beta +2)+\rho )+ (\rho -1) (\beta -\rho )^2\geq 0,\] which is alo proved below.
\end{itemize}
The missing proofs are as follow. First, let us define $\kappa:=\tfrac\mu L\in[0,1]$, the (inverse) condition number, and recall that we want to prove the expressions above to be nonnegative when $\rho=\tfrac{1}{1+\kappa^{3/4}}$ and $\beta_-\leq \beta\leq \beta_+$ with $\beta_- = \tfrac{\sqrt{1} - \sqrt[4]{\kappa}}{\sqrt{1} + \sqrt[4]{\kappa}}$ and $\beta_+ = \tfrac{\sqrt{1} - \sqrt{\kappa}}{\sqrt{1} + \sqrt{\kappa}}$.
\begin{itemize}
    \item To show that $\rho ^3-\beta ^2\geq 0$, let us remark that the expression is a second order polynomial in the variable $\beta$ with negative curvature. Therefore, its minimum values are achieved on the boundary of the interval, and it is sufficient to show $\rho ^3-\beta_- ^2\geq 0$ and $\rho ^3-\beta_+ ^2\geq 0$ for establishing our claim. For the case $\beta=\beta_-$, we get:
    \begin{equation*}
    \begin{aligned}
    \rho ^3-\beta_- ^2 &= \tfrac{\kappa ^{1/4} \left(4-8 \kappa ^{1/4}+9 \sqrt{\kappa }-4 \kappa ^{3/4}-4 \kappa +9 \kappa ^{5/4}-8 \kappa ^{3/2}+4 \kappa ^{7/4}-\kappa ^2\right)}{\left(1+\kappa ^{1/4}\right)^3 \left(1-\kappa ^{1/4}+\sqrt{\kappa }\right)^3},
    \end{aligned}
    \end{equation*}
    and we need to show that $\left(4-8 \kappa ^{1/4}+9 \sqrt{\kappa }-4 \kappa ^{3/4}-4 \kappa +9 \kappa ^{5/4}-8 \kappa ^{3/2}+4 \kappa ^{7/4}-\kappa ^2\right)$ is non negative for all $\kappa\in[0,1]$. For showing that, we perform the change of variable $x\leftarrow \kappa^{1/4}$ (which is invertible since $\kappa\in[0,1]$), and study the polynomial 
    \[ 
    p_1(x)= -x^8+4x^7-8x^6+9x^5-4x^4-4x^3+9x^2-8x+4,
    \]
    such that
\BEAS
p_1(x)&\geq& 3x^7 -8x^6+9x^5-4x^4-4x^3+9x^2-8x+4\\
        & =& 3x^7 -8x^6+9x^5-4x^4-4x^3+5x^2 +4(1-x)^2\\
        & \geq& 3x^7 -8x^6+9x^5-4x^4-4x^3+5x^2\\
        & \geq& 3x^7 -8x^6+9x^5-4x^4+x^3\\
        & = & 3x^7 -8x^6+5x^5 +x^3(2x-1)^2\\
        & \geq& x^5(3x^2 -8x+5)\\
        & = &x^5(1-x)(5-3x)\\
        & \geq& 0,
\EEAS
hence finally $\rho ^3-\beta_- ^2\geq0$. For the case $\beta=\beta_+$, we obtain:
    \begin{equation*}
    \begin{aligned}
    \rho ^3-\beta_+ ^2 &=\tfrac{\sqrt{\kappa } \left(4-3 \kappa ^{1/4}+6 \kappa ^{3/4}-3 \kappa -3 \kappa ^{5/4}+6 \kappa ^{3/2}-\kappa ^{7/4}-3 \kappa ^2+2 \kappa ^{9/4}-\kappa ^{11/4}\right)}{\left(1+\kappa ^{1/4}\right)^3 \left(1+\sqrt{\kappa }\right)^2 \left(1-\kappa ^{1/4}+\sqrt{\kappa }\right)^3},
    \end{aligned}
    \end{equation*}
    and we need to show that
    \[\left(4-3 \kappa ^{1/4}+6 \kappa ^{3/4}-3 \kappa -3 \kappa ^{5/4}+6 \kappa ^{3/2}-\kappa ^{7/4}-3 \kappa ^2+2 \kappa ^{9/4}-\kappa ^{11/4}\right)\]
    is nonnegative for all $\kappa\in[0,1]$. After changing variable $x\leftarrow \kappa^{1/4}$ (which is invertible since $\kappa\in[0,1]$), we study the polynomial
\[p_2(x)= -x^{11}+2x^9-3x^8-x^7+6x^6-3x^5-3x^4+6x^3-3x+4\]
such that
\BEAS    
p_2(x)&\geq& x^9-3x^8-x^7+6x^6-3x^5-3x^4+6x^3-3x+4\\
    &\geq& x^9-3x^8-x^7+6x^6-3x^5-3x^4+6x^3+1\\
    &\geq& x^9-3x^8-x^7+6x^6+1\\
    &\geq& x^9+2x^6+1\\
    &\geq& 0,
\EEAS
hence $\rho ^3-\beta_+ ^2\geq0$.
    \item Similarly, the expression $p_3(\kappa)=\left(\kappa  \rho  (2 \beta  \rho -\beta  (\beta +2)+\rho )+(\rho -1) (\beta -\rho )^2\right)$ is also a second order polynomial in $\beta$, with leading coefficient
    \[ -(1-\rho)-\kappa\rho\leq -(1-\rho)\leq 0. \]
    Therefore, this quadratic function is also concave and we only need to verify the inequality on the boundary of the interval $[\beta_-,\beta_+]$. In the case $\beta=\beta_-$, we get:
    \begin{equation*}
    \begin{aligned}
    p_3(\beta_-) &= \tfrac{\left(1-\sqrt{\kappa }+\kappa ^{3/4}\right) \kappa ^{7/4}}{\left(1+\kappa ^{1/4}\right)^3 \left(1-\kappa ^{1/4}+\sqrt{\kappa }\right)^3}\geq 0.
    \end{aligned}
    \end{equation*}
    For case $\beta=\beta_+$, we obtain:
    \begin{equation*}
    \begin{aligned}
    p_3(\beta_+)&=\tfrac{\kappa ^{3/2} \left(4-7 \kappa ^{1/4}+4 \sqrt{\kappa }+5 \kappa ^{3/4}-7 \kappa +3 \kappa ^{5/4}+2 \kappa ^{3/2}-\kappa ^{7/4}+\kappa ^2\right)}{\left(1+\kappa ^{1/4}\right)^3 \left(1+\sqrt{\kappa }\right)^2 \left(1-\kappa ^{1/4}+\sqrt{\kappa }\right)^3},
    \end{aligned}
    \end{equation*}
    and we need to show that $\left(\kappa ^2-\kappa ^{7/4}+2 \kappa ^{3/2}+3 \kappa ^{5/4}-7 \kappa +5 \kappa ^{3/4}+4 \sqrt{\kappa }-7 \sqrt[4]{\kappa }+4\right)$ is nonnegative for $\kappa \in [0,1]$. We change  variables $x\leftarrow \kappa^{1/4}$ (which is invertible since $\kappa\in[0,1]$), and study the polynomial \[p_4(x) = x^8-x^{7}+2 x ^{6}+3x ^{5}-7 x^4 +5 x ^{3}+4 x^2 -7 x+4\] on the interval $[0,1]$:
\BEAS    
p_4(x) &=& x^8 -x^7 +2x^6 +3x^5 -7x^4 + 5x^3  +x +4(1-x)^2\\
    &\geq & x^3(x^5 -x^4 +2x^3 +3x^2 -7x + {5})\\
    &= & x^3(x^5 -x^4 +2x^3 -x^2 +x + {1} + 4(1-x)^2)\\
    &\geq & x^3(x^5 + x^3 + {1} + 4(1-x)^2)\\
    &\geq & 0,
\EEAS
hence $p_3(\beta_+)\geq 0$, which concludes the proof.
\end{itemize}
\end{proof}

\subsection{Proof of Proposition~\ref{prop:34}}\label{proof:34}
\begin{proof}
The case $m = 0$ results from Lemma~\ref{lem:acd_34} applied recursively and the case $m = \infty$ result from Proposition~\ref{prop:acd_as_gd}. In the following we consider that $m \in [1,N]$. Then for $(y_k,x_k)_{k\in[m+1,N]}$, 
\[
\frac{\sqrt{L}-\sqrt[4]{L\mu}}{\sqrt{L}+\sqrt[4]{L\mu}}\leq \beta_{k-1} \leq \frac{\sqrt{L}-\sqrt{\mu}}{\sqrt{L}+\sqrt{\mu}}
\]
and Lemma~\ref{lem:acd_34} applies so 
\[ 
f(y_N) - f_* \leq \rho_1^{N-m}\left(\frac{L}{2}\|\frac{1}{\sqrt{\rho_1}}(x_m-x_*) - \sqrt{\rho_1}(y_m-x_*)\|^2 + f(y_m)-f_* \right) 
\]
and we have
\BEAS    
    &&\frac{L}{2}\|\frac{1}{\sqrt{\rho_1}}(x_m-x_*) - \sqrt{\rho_1}(y_m-x_*)\|^2 + f(y_m)-f_*  \\ 
    &&= \frac{L}{2}\left(\frac{1}{\rho_1}-1\right)\|x_m-x_*\|^2 - \frac{L}{2}(1-\rho_1)\|y_m-x_*\|^2 +\frac{L}{2}\|x_m-y_m\|^2 + f(y_m)-f_* \\
    && \leq\frac{L}{2}\left(\frac{1}{\rho_1}-1\right)\|x_m-x_*\|^2+\frac{L}{2}\|x_m-y_m\|^2 + f(y_m)-f_*  \\
    && \leq \frac{L}{2}\left(\frac{1}{\rho_1}-1\right)\left(\|x_m-y_m\| + \|y_m-x_*\| \right)^2
    +\frac{L}{2}\|x_m-y_m\|^2 + f(y_m)-f_*  \\
    && \leq \left(\frac{1}{\rho_1}-1\right)\left(\sqrt{\frac{L}{2}}\|x_m-y_m\| + \sqrt{\frac{L}{2\mu}}\sqrt{f(y_m)-f_*} \right)^2 +\frac{L}{2}\|x_m-y_m\|^2 + f(y_m)-f_*
\EEAS
We can now apply Corollary~\ref{cor:acd_as_gd}. From the definition of $m$, we have \[ 2(f(y_k)-f_*) \leq \frac{1}{\sqrt{L\mu}}\|\nabla f(y_k)\|^2 \text{ for all }k\in[1,m].\] 
Therefore, by denoting $\rho_2 =\left(1+\sqrt{\frac{\mu}{L}} \right)^{-1}$, we have the following inequalities
\BEAS
\frac{L}{2}\|x_m-y_m\|^2 + f(y_m)-f_* &\leq & \rho_2^m(f(x_0)-f_*),\\
\sqrt{\frac{L}{2}}\|x_m-y_m\| & \leq & \rho_2^{m/2}\sqrt{f(x_0)-f_*},\\
\sqrt{\frac{L}{2\mu}}\sqrt{f(y_m)-f_*} & \leq & \sqrt{\frac{L}{2\mu}}\rho_2^{m/2}\sqrt{f(x_0)-f_*},
\EEAS
which leads to
\BEAS
&&\frac{L}{2}\|\frac{1}{\sqrt{\rho_1}}(x_m-x_*) - \sqrt{\rho_1}(y_m-x_*)\|^2 + f(y_m)-f_* \\ 
&& \leq \left(\left(\frac{1}{\rho_1}-1\right)\left(1 + \sqrt{\frac{L}{2\mu}} \right)^2 +1\right)\rho_2^m(f(x_0) -f_*),
\EEAS
reaching the desired result.
\end{proof}
\subsection{Proximal variants}\label{proof:prox}
A natural extension of smooth and strongly convex optimization is the case composite optimization
\[ \min_{x\in\reals^n} \{ F(x)\equiv f(x)+h(x)\},\]
where $f\in\mathcal{F}_{\mu,L}$ and $h\in\mathcal{F}_{0,\infty}$ is a proper convex function with proximal operator available.

\begin{algorithm}[!ht]
\caption{Proximal accelerated gradient method}
\label{algo:prox_agdp}
\begin{algorithmic}
\STATE \algorithmicrequire\;$x_0 \in \reals^n$, $f_*\in \reals$, $L$ smoothness constant.
\STATE $y_0 = x_0$, 
\FOR{$k \geq 0$}
\STATE $y_{k+1} = \mathrm{prox}_{h/L} \left(x_k - \frac{1}{L}\nabla f(x_k)\right)$
\STATE compute $\tilde{\mu}_k$ and $\beta_k=\tfrac{\sqrt{L}-\sqrt{\tilde{\mu}_k}}{\sqrt{L}+\sqrt{\tilde{\mu}_k}}$
\STATE $x_{k+1} = y_{k+1} + \beta_k(y_{k+1}-y_k)$
\ENDFOR
\STATE \algorithmicensure\; $y_{k+1}$
\end{algorithmic}
\end{algorithm}

We used the proximal version of AGM with constant momentum. It is of the same form as Algorithm~\ref{algo:agdp} but the gradient step is combined with a proximal step. We extended our estimate $\tilde{\mu}_k$ the following way. Given $F = f + h$ where $f \in \mathcal{F}_{\mu,L}$ and $h$ a proper convex function that is proximable, $\tilde{\mu}_k = \frac{\mathcal{D}(y_{k+1},L)}{2(F(y_{k+1}) - F_*)}$ where $\mathcal{D}(x,L) = -2 L\;\underset{y}{\min}\; \left[ \langle \nabla f(x),y-x \rangle + \tfrac{L}{2}\|x-y\|^2 +h(y) -h(x) \right] $. Notice that when $h = 0$ the previous formula is exactly \eqref{eq:Acc1}. Also, when they are well defined these estimates still belong to $[\mu,L]$ \citep{Kari16}.

\subsection{Study of standard Polyak steps}\label{app:polyak1}

From numerical experiments, we noticed that~\eqref{eq:Polyakstep2} was actually typically performing only slightly better than vanilla gradient descent. From a worst-case point of view, this is expected. However, our experiments (see Figure~\ref{fig:intro-adap}-\ref{fig:exps}) suggest that regular Polyak steps \eqref{eq:Polyakstep1} actually perform much better than one could expect from its worst-case guarantees.

In this section, we provide a tentative explanation of this behavior, through experiments on a toy example. Figure~\ref{fig:polyak1} (top) was obtained by running the methods on a least squares problem (we used a rescaled version of the Sonar dataset, with regularity parameters $L=1$ and $\mu = 0.01$).

Similar in spirit as in Figure~\ref{fig:polyak2_rates} (left), we provide, in Figure~\ref{fig:polyak1}, the worst-case ratio of $\|x_{k+1}-x_*\|^2/\|x_k-x_*\|^2$ (by solving~\eqref{eq:inf-interp} numerically for regular Polyak steps). One can observe that the worst case rate (using distances to optimum as the criterion) is slightly worse than that of~\eqref{eq:Polyakstep2} (note that this rate can be improved through the use of refined Lyapunov functions). 

In Figure~\ref{fig:polyak1}, we provide the distributions of step size magnitudes observed through the optimization process on the toy example. One can notice that the distribution does not fully concentrate around the worst-case value (the value of $\gamma$ that achieves the worst-case) for~\eqref{eq:Polyakstep1}. A large proportion of effective step size values are even located in regions of fast convergence. On the contrary, for~\eqref{eq:Polyakstep2}, the distribution is much more concentrated around its worst-case. Those distributions strongly suggest that worst case analyses might not be the best way to explain the good practical behaviors of such adaptive methods. 

\begin{figure}
    \centering
    \hspace{-1.1cm}\begin{tikzpicture}
		\begin{semilogyaxis}[legend pos=outer north east, legend style={draw=none},legend cell align={left},xlabel={iterations},ylabel={$f-f_*$}, plotOptions,  ymin=1e-11, ymax=1e2,xmin=-1,xmax=400,xtick={0,200,400},width=0.48\linewidth, height=0.41\linewidth]
		\addplot [color=colorP1] table [x=iter,y=gd] {figures/LSSonareasy1e-2.txt};
		\addlegendentry{GD}
		\addplot [color=colorP2] table [x=iter,y=apg] {figures/LSSonareasy1e-2.txt};
		\addlegendentry{AGM-smooth}
		\addplot [color=colorP4] table [x=iter,y=pol] {figures/LSSonareasy1e-2.txt};
		\addlegendentry{Polyak}
		\addplot [color=colorP6] table [x=iter,y=pol2] {figures/LSSonareasy1e-2.txt};
		\addlegendentry{Variant I}
		\end{semilogyaxis}
		\end{tikzpicture}\\
		\hspace{1.86cm}
		\begin{tikzpicture}
		\begin{axis}[legend pos=outer north east, legend style={draw=none},legend cell align={left},ylabel={},xlabel={$\gamma$},every axis plot/.append style={ultra thick}, plotOptions,  ymin=0, ymax=1,xmin=0,xmax=50,width=.48\linewidth,height=.41\linewidth]
		\addplot[ybar,bar width=3pt,fill=black,opacity=0.5] table [x=bincenter, y=prop] {figures/bins_polyak1.txt};
    \addlegendentry{Distribution of observed $\{\gamma_k\}_k$}
		\addplot [color=colorP1] table [x=gamma,y=rho] {figures/gamma_rho_polyak1_1e-2.txt};
		\addlegendentry{Convergence rate $\rho(\gamma)$}
		\addplot [color=colorP2, dashed, domain=0:50, samples=2] {0.9901/(1.01)^2};
		\addlegendentry{$\max_\gamma \,\rho(\gamma)=\frac{L^2-L\mu+\mu^2}{(L+\mu)^2}$}
		\end{axis}
		\end{tikzpicture}   \\
		\hspace{1.9cm}\begin{tikzpicture}
		\begin{axis}[legend pos=outer north east, legend style={draw=none},legend cell align={left},ylabel={},xlabel={$\frac{\gamma}{2}$},every axis plot/.append style={ultra thick}, plotOptions,  ymin=0, ymax=1,xmin=0,xmax=50,width=.48\linewidth,height=.41\linewidth]
		\addplot[ybar,bar width=3pt,fill=black,opacity=0.5] table [x=bincenter, y=prop] {figures/bins_polyak2.txt};
    \addlegendentry{Distribution of observed $\{\gamma_k\}_k$}
		\addplot [color=colorP1] table [x=gamma,y=rho] {figures/gamma_rho_polyak2_1e-2.txt};
		\addlegendentry{Convergence rate $\rho(\gamma)$}
		\addplot [color=colorP2, dashed, domain=0:50, samples=2] {0.99^2/(1.01)^2};
		\addlegendentry{$\max_\gamma\,\rho(\gamma)=\frac{(L-\mu)^2}{(L+\mu)^2}$}
		
		\end{axis}
		\end{tikzpicture}
    
    \caption{Top: Least squares on rescaled Sonar dataset ($L=1$ and $\mu=0.01$). Middle: $\rho(\gamma)$ for~\eqref{eq:Polyakstep1} (blue)---computed numerically following the methodology of \S~\ref{sec:proofsmech} with fixed $L=1$ and $\mu=0.01$. Distribution of effective step size magnitudes (black) used throughout the 150 iterations of \eqref{eq:Polyakstep1} appearing in (top).
    Bottom: $\rho(\gamma)$ for~\eqref{eq:Polyakstep2} (blue)---with $L=1$ and $\mu = 0.01$. Distribution of effective step size magnitudes (black) used throughout the 400 iterations of~\eqref{eq:Polyakstep2} appearing in (top). }
    \label{fig:polyak1}
\end{figure}
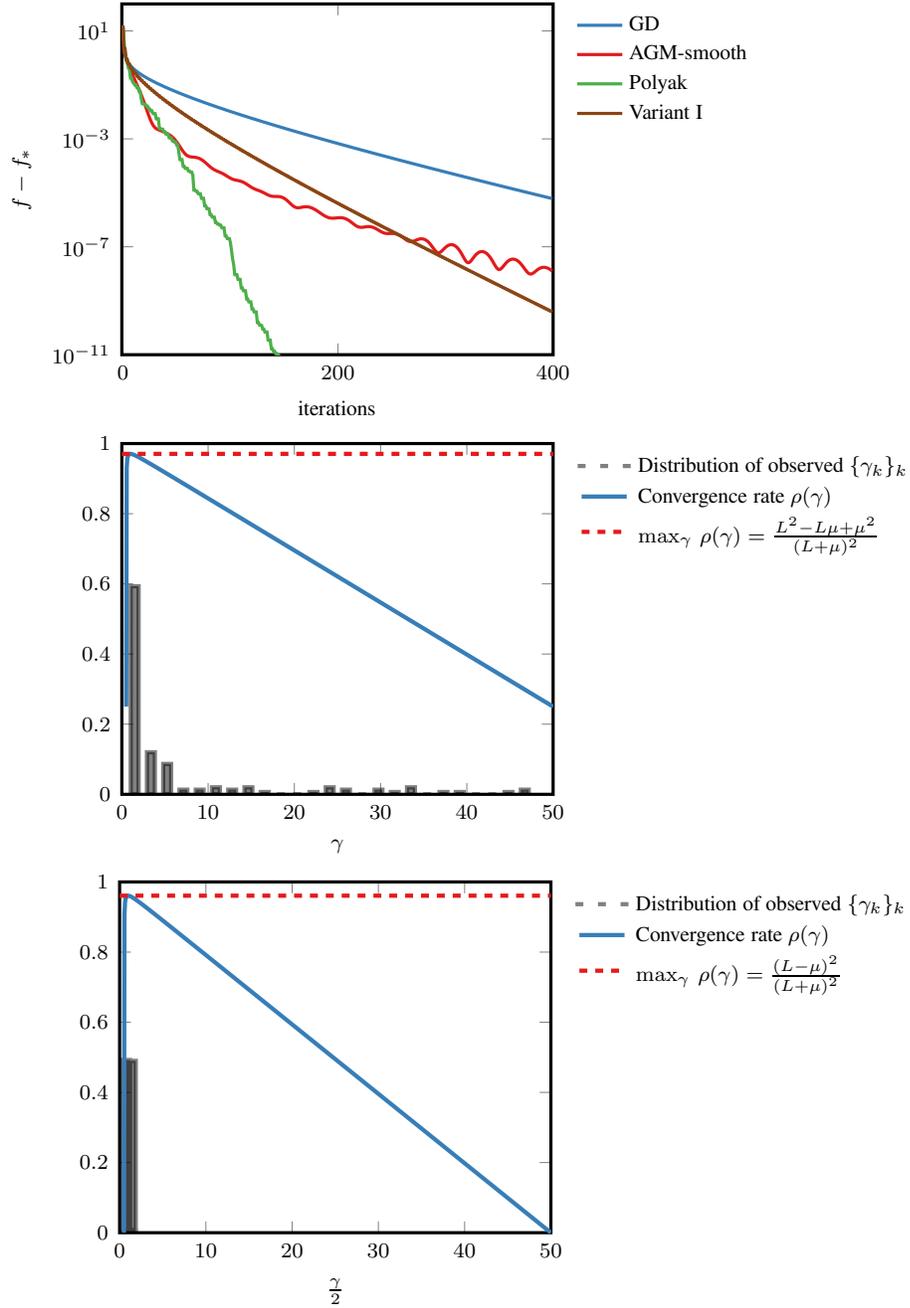
\end{document}